\newtheorem{theorem}{Theorem}
\newtheorem{lemma}[theorem]{Lemma}
\newtheorem{proposition}[theorem]{Proposition}
\newtheorem{corollary}[theorem]{Corollary}
\newtheorem*{theorem*}{Theorem}
\theoremstyle{definition}
\theoremstyle{remark}
\newtheorem{remark}[theorem]{Remark}
\newtheorem*{remark*}{Remark}
\newtheorem*{proposition*}{Proposition}
\newcommand{\gp}[1]{{\left\langle #1 \right\rangle}}
\let\al\alpha
\let\de\delta
\let\si\sigma
\let\om\omega
\let\Ga\Gamma
\let\De\Delta
\let\La\Lambda
\def\ovc{{\overline{c}}}
\def\ovu{{\bar{u}}}
\def\ovv{{\bar{v}}}
\def\ovw{{\bar{w}}}
\def\ovX{{\bar{X}}}
\def\CC{{\mathcal C}}
\def\CG{{\mathcal G}}
\def\CL{{\mathcal L}}
\def\CS{{\mathcal S}}
\def\NP{{\mathbf{NP}}}
\def\MZ{{\mathbb{Z}}}
\def\MQ{{\mathbb{Q}}}
\def\any{{\quad\forall\,}}
\let\isom\simeq
\let\bd\partial
\let\ti\tilde
\let\seq\subseteq
\let\eset\emptyset
\let\ov\overline
\DeclareMathOperator\supp{supp}
\DeclareMathOperator\diam{diam}
\DeclareMathOperator\Vol{Vol}
\DeclareMathOperator\Sp{Sp}
\DeclareMathOperator\proj{proj}
\DeclareMathOperator\DP{DP}
\newcommand\Ints{{\mathbb Z}}
\newcommand\Reals{{\mathbb R}}
\newcommand\set[1]{\{ #1 \}}
\newcommand\bset[1]{\left\{ #1 \right\}}
\newcommand\setof[2]{\{ #1 \mid #2\}}
\newcommand\sgp[1]{\langle #1 \rangle}
\title[Orientable quadratic equations]{Orientable quadratic equations in free metabelian groups}
\author[]{Igor Lysenok}
\address{Stevens Institute of Technology, Hoboken, NJ 07030, USA
\and
Steklov Institute of Mathematics, Gubkina str.\ 8, 119991 Moscow, Russia} \email{igor.lysenok@gmail.com}
\thanks{The first author has been partially supported by the Russian Foundation for Basic Research}
\author[]{Alexander Ushakov}
\address{Stevens Institute of Technology, Hoboken, NJ 07030, USA} \email{aushakov@stevens.edu}
\subjclass[2010]{Primary 20F16; Secondary 20F10, 68W30}
\date{}
\keywords{Free metabelian group, Diophantine problem, quadratic equation, NP-completeness}
\date{}
\begin{document}
\maketitle

\begin{abstract}
We prove that the Diophantine problem for orientable quadratic equations in free metabelian groups is decidable and furthermore, $\NP$-complete. In the case when the number of variables in the equation is bounded, the problem is decidable in polynomial time.
%
\end{abstract}

\section{Introduction}

Let $G$ be a group, $X$ a set of variables, 
and $F_X$ the free group on $X$.
An {\em equation} in ~$G$ is a formal equality 
$W=1$ where $W \in G \ast F_X$.
A {\em solution} of an equation $W=1$ is a homomorphism 
$\al: G*F_X \to G$ such that $\al(W)=1$
and $\al(g)=g$ for all $g \in G$. 

\medskip\noindent
\textbf{Diophantine problem $\DP$ in a group $G$} for a class of equations $\CC$
is an algorithmic question to decide if a given equation $W=1$ in $\CC$
has a solution, or not.

\subsection{Quadratic equations in a group $G$}
A word $W \in G*F_X$ and an equation $W=1$ in $G$ are {\em quadratic} 
if every variable $x\in X$ occurring in $W$
occurs exactly twice (as $x$ or $x^{-1}$).
The Diophantine problem for quadratic equations in $G$
is quite general, for example it naturally contains the word problem
and the conjugacy problem for $G$. 
The problem received a lot of attention recently 
and was investigated for several classes of groups.
In particular, it was shown in \cite{KLMT} that solving quadratic equations
is $\NP$-complete for free non-abelian groups.
Later the same result was generalized 
in \cite{Kharlampovich-Mohajeri-Taam-Vdovina:2017} 
to the class of non-cyclic hyperbolic torsion-free groups.
Quadratic equations are solvable in the first Grigorchuk group  \cite{Lysenok_Miasnikov_Ushakov:2016}.
Recently Roman'kov showed in \cite{Roman'kov_2016} 
that quadratic equations
are unsolvable in a nilpotent group of class 2.

In general, quadratic equations are more approachable than general ones.
Typically, they can be treated by a specialized method.
For instance, solving quadratic equations in free groups is relatively easy, 
it essentially reduces to enumeration of the minimal automorphic orbit
of the equation (see \cite{CE}), while 
solving general equations require very sophisticated 
tools \cite{Mak,Diekert-Jez-Plandowski:2016}.
Similarly, the main result of this paper demonstrates 
the difference between quadratic
and general equations in free metabelian groups. 
The former are proved to be solvable in $\NP$, while
the latter are known to be unsolvable.

A quadratic equation $W=1$ in $G$ is called {\em orientable}
if each variable $x$ occurring in $W$ occurs once as $x$ 
and once as $x^{-1}$, and {\em non-orientable} otherwise.
Two special forms of orientable and non-orientable equations are
called \emph{standard quadratic equations}:
\begin{equation}\label{eq:orientable-quadratic}
  [x_1,y_1] [x_2,y_2] \dots [x_g,y_g] \, 
  c_1 z_2^{-1} c_2 z_2 \dots z_{m}^{-1} c_{m} z_{m} = 1
    \quad (g \ge0, \ m\ge 0),
\end{equation}
\begin{equation}
    \label{eq:nonorientable-quadratic}
  x_1^2 x_2^2 \dots x_g^2 \, 
  c_1 z_2^{-1} c_2 z_2 \dots z_{m}^{-1} c_{m} z_{m} = 1
    \quad (g >0, \ m\ge 0),
\end{equation}
where $x_i,y_i,z_i$ are variables and $c_i \in G$ are coefficients.
We assume that the coefficient part 
$c_1 z_2^{-1} c_2 z_2 \dots z_{m}^{-1} c_{m} z_{m}$ is void in the case $m=0$.
The choice of terminology comes the following well-known fact
(whose proof essentially repeats the proof of the
classification theorem for compact surfaces with boundary, 
see for example a classical topology 
textbook \cite[Sections 38--40]{Seifert-Threllfal}; 
a detailed argument can be found for example in \cite{GK}).
See also \cite[Theorem 3.2]{CE}.

\begin{proposition}
\label{pr:standard-quadratic}
Let $W \in G * F_X$ be a quadratic word in $G$. 
If $W$ is orientable (non-orientable resp.), 
then there exists an automorphism $\phi$ of
$G * F_X$ with $\phi|_G = \mathit{id}$ such that $\phi(W)$ is of 
the form \eqref{eq:orientable-quadratic} 
(form \eqref{eq:nonorientable-quadratic} resp.). 
Furthermore, the automorphism $\phi$ (written as a composition of some
elementary automorphisms) and the word $\phi(W)$ can be 
computed in time bounded by $O(|W|^2)$.
\end{proposition}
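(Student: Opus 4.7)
The plan is to follow the algorithmic version of the classical proof of the classification of surfaces with boundary, carried out directly on $W$ via Nielsen-type automorphisms of $G*F_X$ fixing $G$ pointwise. Three families of \emph{elementary} automorphisms suffice: (a) permutation of the variables in $X$, (b) inversion $x\mapsto x^{-1}$ of a single variable, and (c) substitutions $x\mapsto xy^{\pm 1}$ or $x\mapsto y^{\pm 1}x$ for distinct $x,y\in X$. Any such move produces another quadratic word whose length grows by at most $O(1)$ (since the inserted variable $y$ occurs exactly twice in $W$), and it can be executed in time $O(|W|)$, together with cyclic cancellation of any $xx^{-1}$ pairs it creates.

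The first stage normalizes the coefficient part. Working on the cyclic word, I would pick a coefficient $c$, locate a nearby variable $z$, and use $O(1)$ substitutions of type (c) to bring $c$ into a block $z^{-1}cz$ and push that block to one end. Iterating this over $m-1$ of the $m$ coefficients leaves the last coefficient stranded as a single factor, yielding a word of the form $P\cdot c_1 z_2^{-1}c_2 z_2\cdots z_m^{-1}c_m z_m$, where $P$ is a coefficient-free quadratic word in the remaining variables.

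The second stage normalizes $P$. In the orientable case, I search for a pair of variables $x,y$ whose four occurrences interlock as $x\cdots y\cdots x^{-1}\cdots y^{-1}$ and apply the classical three-substitution reduction to rewrite this quadruple into a single commutator $[x',y']$ placed at the left end. Any variable occurring in the separated pattern $x\,u\,x^{-1}$ (with $u$ independent of $x$) can be cancelled after recursing on $u$. In the non-orientable case, I instead find a variable whose two occurrences have the same sign, slide them adjacent by type (c) moves to form a square $(x')^2$, and iterate; any surviving commutator alongside a square is converted to three squares via the Nielsen equivalence $[x,y]\,z^2 \longrightarrow (x')^2(y')^2(z')^2$. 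At termination $\phi(W)$ matches \eqref{eq:orientable-quadratic} or \eqref{eq:nonorientable-quadratic}, as dictated by the original parity of the occurrences.

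The main obstacle is the complexity bookkeeping. Each elementary move eliminates at least one ``defect'' --- a misplaced coefficient, an interlocking pair, or a same-sign pair --- and the number of defects is $O(|W|)$. Combined with the $O(1)$ length increase and $O(|W|)$ execution cost per move, this yields the overall $O(|W|^2)$ bound and shows that $\phi$ is a composition of $O(|W|)$ elementary automorphisms. The subtler point is choosing a reduction order that strictly decreases a well-founded potential (for example, the number of letters not yet in final position) so that no new defects are created while old ones are removed; this is precisely the content of the proof of the surface classification and is made explicit in \cite[Theorem 3.2]{CE}, whose analysis I would mimic.
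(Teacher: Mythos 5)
The paper itself does not prove Proposition \ref{pr:standard-quadratic}; it cites the surface-classification argument (\cite{Seifert-Threllfal}, \cite{GK}, \cite[Theorem 3.2]{CE}), and your sketch follows that same strategy. However, your list of elementary automorphisms is too small, and this is a genuine gap, not a presentational one. The moves you allow --- permutations and inversions of variables and the Nielsen substitutions $x\mapsto xy^{\pm 1}$, $x\mapsto y^{\pm 1}x$ with $x,y\in X$ --- all restrict to automorphisms of $F_X$ and act trivially on $G$. Write the cyclic word $W$ as $d_1w_1d_2w_2\cdots d_kw_k$ with nontrivial $G$-syllables $d_i$ and nontrivial arcs $w_i\in F_X$ between them. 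Any composition $\phi$ of your moves sends each arc $w_i$ to the nontrivial reduced word $\phi(w_i)$, so after reduction the $G$-syllables are neither merged nor multiplied by anything: the number $k$ of $G$-syllables of the cyclic word is invariant under your moves (and under conjugation). This makes your first stage impossible in general. Concretely, $W=xy\,c_1\,x^{-1}c_2\,y^{-1}$ is orientable quadratic with two $G$-syllables and interlocking pair $x,y$, hence genus $1$; its standard form is $[x_1,y_1]\,c$ with the single coefficient $c=c_2c_1$ (so $g=1$, $m=1$), while the only candidate of the form \eqref{eq:orientable-quadratic} with two coefficients and at most two variables is $c_1'z_2^{-1}c_2'z_2$, which has genus $0$. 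So no composition of your moves reaches the standard form.

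The missing ingredient is the family of coefficient automorphisms $x\mapsto gx$ and $x\mapsto xg$ for $g\in G$ (for instance $y\mapsto yc_1^{-1}$ turns the word above into $xyx^{-1}(c_2c_1)y^{-1}$, which your moves then finish off). Together with your families (a)--(c) these generate the pointwise stabilizer of $G$ in $\mathrm{Aut}(G*F_X)$ (Fouxe--Rabinovitch), and they are exactly the extra elementary transformations used in \cite{GK} and \cite{CE}; each such move also costs $O(|W|)$ and increases the length by $O(1)$, so your defect-counting analysis and the $O(|W|^2)$ bound survive their addition. A smaller point worth flagging explicitly: like the paper's own identification of \eqref{eq:orientable-quadratic} with \eqref{eq:equation}, the reduction you describe operates on the cyclic word, i.e.\ it produces the standard form only up to conjugation (e.g.\ $xcx^{-1}$ cannot be carried to a bare coefficient by any automorphism fixing $G$); this is harmless for the equation $W=1$ but should be stated.
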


Proposition \ref{pr:standard-quadratic} allows us to restrict our attention
to standard quadratic equations.
In what follows, we prefer to consider standard orientable 
quadratic equations written in a more symmetric form with an extra variable ~$z_1$:
\begin{equation} \label{eq:equation}
  [x_1, y_1] [x_2, y_2] \dots [x_g, y_g] = z_1 c_1 z_1^{-1} z_2 c_2 z_2^{-1} \ldots z_m c_mz_m^{-1}
\end{equation}
which is obviously equivalent to \eqref{eq:orientable-quadratic}.

\subsection{Main results}
In this paper we consider orientable
quadratic equations in the free metabelian group $M_n$ of rank $n\ge 2$.

\begin{theorem}\label{th:main_theorem}
The Diophantine problem for orientable quadratic equations in $M_n$ is solvable.
\end{theorem}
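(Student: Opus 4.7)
The plan is to translate the standard orientable equation into a system over the Noetherian Laurent polynomial ring $\MZ[\MZ^n] = \MZ[t_1^{\pm 1}, \dots, t_n^{\pm 1}]$ via the Magnus embedding, and then reduce solvability to finitely many linear systems. By Proposition~\ref{pr:standard-quadratic} we may restrict to equations of the form \eqref{eq:equation}. Recall that the Magnus embedding $M_n \hookrightarrow \MZ^n \ltimes \MZ[\MZ^n]^n$ sends $g \mapsto (\bar g, D(g))$, where $\bar g \in \MZ^n$ is the abelianization of $g$ and $D(g) = (D_1(g), \dots, D_n(g)) \in \MZ[\MZ^n]^n$ is its Fox derivative; the image is cut out by the fundamental identity $\sum_\ell (t_\ell - 1)\, D_\ell(g) = t^{\bar g} - 1$.

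Projecting \eqref{eq:equation} to the abelianization $\MZ^n$, all variable contributions cancel by orientability, leaving only the coefficient condition $\sum_j \bar c_j = 0$, which is necessary for solvability. Assuming this, a direct computation in the Magnus representation yields
\[
[x,y] \;\leftrightarrow\; \bigl(0,\ (1 - t^{\bar y})\, D(x) + (t^{\bar x} - 1)\, D(y)\bigr),
\]
\[
z c z^{-1} \;\leftrightarrow\; \bigl(\bar c,\ (1 - t^{\bar c})\, D(z) + t^{\bar z}\, D(c)\bigr).
\]
Substituting into \eqref{eq:equation} produces a single module equation in $\MZ[\MZ^n]^n$ whose unknowns are the integer vectors $\bar x_i, \bar y_i, \bar z_j \in \MZ^n$ and the Fox derivatives $D(x_i), D(y_i), D(z_j)$, coupled with the linear fundamental-identity constraints for each variable. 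Crucially, \emph{once the abelianization vectors $\bar x_i, \bar y_i, \bar z_j$ are fixed}, the remaining system is linear over $\MZ[\MZ^n]$, and such systems are effectively decidable since $\MZ[\MZ^n]$ is a computable Noetherian domain in which submodule membership in $\MZ[\MZ^n]^n$ can be tested via Gr\"obner bases.

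The main obstacle is therefore to establish a computable bound on the abelianization data of a minimal solution: the plan is to show that whenever \eqref{eq:equation} is solvable, there exists a solution in which $\|\bar x_i\|,\, \|\bar y_i\|,\, \|\bar z_j\|$ are bounded by an effective function of $|W|$ and $\max_j |\bar c_j|$. I expect this to proceed by minimization over a complexity measure on solutions: given an arbitrary solution, apply elementary moves — Dehn-twist style substitutions on the commutator factors, translations $z_j \mapsto z_j g$ absorbed by modifying $D(z_j)$, and simplifications inside the Magnus representation — and prove that under minimization the abelianizations cannot grow without bound. Once such a bound is in hand, one enumerates finitely many candidate tuples $(\bar x_i, \bar y_i, \bar z_j)$ and for each tests solvability of the associated linear system over $\MZ[\MZ^n]$; decidability of the Diophantine problem follows. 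Sharpening the bounding step to polynomial order is the additional ingredient required for the $\NP$-membership claim of the full main theorem.
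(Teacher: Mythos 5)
Your setup is essentially the paper's own, in different clothing: the $1$-chain representation used in the paper is explicitly noted there to be equivalent to the Fox-derivative/Magnus picture you adopt, and the observation that, once the abelianizations of all variables are fixed, what remains is a decidable linear problem is the same structural fact the paper packages as Proposition \ref{pr:solution-reduction} (the paper goes further and replaces the Gr\"obner-basis test by two explicit closed-form conditions on the abelianized data, namely \eqref{eq:coefficient-condition} and \eqref{eq:second-homology-condition}, which is what makes the subsequent quantitative analysis possible). So the reduction half of your proposal is sound.

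The genuine gap is the bounding step, which you state as a goal with a plan (``minimization over a complexity measure \dots the abelianizations cannot grow without bound'') rather than prove; this is precisely where almost all of the paper's work lies, and it is not a routine minimization. The conjugator variables $\bar z_j$ can indeed be handled by a translation/clustering argument (Proposition \ref{pr:bounding-w}), but the commutator variables $\bar x_i,\bar y_i$ enter the solvability criterion only through the wedge sum $\sum_i \bar x_i\wedge \bar y_i$ in $\La^2$ and through the subgroup $L$ they generate together with the $\bar c_j$; the set of admissible tuples is an orbit under the symplectic group $\Sp(2g,\MZ)$ acting on $(\bar x_i,\bar y_i)$, and there is no obvious complexity measure whose local minima under your ``elementary moves'' are a priori bounded. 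The paper needs three separate nontrivial ingredients to close this: a dimension-reduction argument showing that if $L$ is too stretched (measured via a Hermite reduced basis) one can pass to a solution with $\dim L$ strictly smaller (Proposition \ref{pr:L-bound}); a norm bound on solutions of $\sum_i u_i\wedge v_i=h$ obtained by induction using Lemma \ref{lm:redution-wrt-single-vector}, Lemma \ref{lm:angle-in-Lambda2} and lattice-reduction inequalities (Proposition \ref{pr:solution-bound}); and the passage to general finitely generated abelian quotients $R=L/Q$. Without an argument of comparable substance, your proposal does not yet establish decidability, let alone the polynomial bounds needed for $\NP$-membership.
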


\begin{theorem}\label{th:main_theorem2}
The Diophantine problem for orientable quadratic equations 
in $M_n$ is $\NP$-complete.
\end{theorem}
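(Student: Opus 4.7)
The strategy splits into showing membership in $\NP$ and $\NP$-hardness.

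For membership in $\NP$, the plan is to refine the decidability argument behind Theorem \ref{th:main_theorem} into a polynomial-size certificate. Via the Magnus embedding $M_n \hookrightarrow \MZ[\MZ^n]^n \rtimes \MZ^n$, every $w \in M_n$ is encoded by a pair $(\bar w, \partial w)$ where $\bar w \in \MZ^n$ is the abelianization and $\partial w \in \MZ[\MZ^n]^n$ is the Fox-derivative component. A putative solution $\si$ of \eqref{eq:equation} is determined by tuples $(\bar\si(v), \partial\si(v))$ indexed by the variables $v \in X$. Substituting into \eqref{eq:equation} produces two independent constraints: an abelianization identity in $\MZ^n$ (which reduces to $\sum_j \bar c_j = 0$), and a module identity in $\MZ[\MZ^n]^n$ whose coefficients are monomials $t^{\bar\si(v)}$ depending only on the chosen abelianizations. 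The certificate consists of the vectors $\bar\si(v) \in \MZ^n$. Verification amounts to checking feasibility of a linear system over $\MZ[\MZ^n]$ with fixed coefficients, which can be performed in polynomial time by standard linear algebra over the commutative Noetherian ring $\MZ[\MZ^n]$. The crucial technical step is bounding the bit-length of some optimal choice of $\bar\si(v)$ polynomially in $|W|$; this should follow by putting the linear system into a canonical normal form and applying integer-programming-style bounds on minimal feasible integer vectors.

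For $\NP$-hardness, the plan is to reduce from \textsc{Subset Sum}. Given integers $a_1, \dots, a_k$ and a target $b$, one constructs an orientable quadratic equation of the form appearing in \eqref{eq:equation}, typically with genus $g=0$ (or a small number of commutators used as slack), whose solvability is equivalent to the existence of a subset of $\{a_1,\dots,a_k\}$ summing to $b$. The coefficients $c_j \in M_n$ are engineered so that their Fox derivatives serve as linearly independent ``markers'' in the module, and so that the abelianization identity induced by \eqref{eq:equation} encodes the subset-sum equality on a distinguished coordinate. The key device is to arrange module rigidity forcing each conjugator abelianization $\bar z_j \in \MZ^n$ into one of two admissible values, encoding the Boolean choice of including $a_j$ in the subset; the target sum $b$ is then read off as an abelianization constraint on the first coordinate.

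The main obstacle will be the $\NP$-hardness reduction. Since quadratic equations in free \emph{abelian} groups are trivially solvable, the hardness must be squeezed out of the coupling between the abelianization layer and the $\MZ[\MZ^n]$-module layer of the Magnus embedding. Designing $c_j$'s rigid enough to force the intended Boolean choice for each $\bar z_j$ -- while simultaneously ruling out spurious module cancellations and unintended use of the commutator slack on the left-hand side of \eqref{eq:equation} -- will require careful Fox-derivative calculations and a precise understanding of which identities in $\MZ[\MZ^n]$ can arise. Ensuring the ``forward'' direction of the reduction genuinely characterizes subset sums, with no unexpected solutions sneaking in through clever choices of $x_i, y_i$, is where the subtlety lies.
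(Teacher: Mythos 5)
Your overall architecture (guess the abelianizations, verify the rest; separately prove hardness by reduction) matches the paper's, but the NP-membership half has a genuine gap at its core. You claim that once the vectors $\bar\si(v)$ are guessed, verification is ``checking feasibility of a linear system over $\MZ[\MZ^n]$ with fixed coefficients, which can be performed in polynomial time by standard linear algebra.'' This does not hold as stated: the coefficients of that system are monomials $t^{\bar\si(v)}$ whose degree is exponential in the binary length of the certificate entries, so the system cannot even be written down in polynomial time, and submodule membership over the Laurent polynomial ring $\MZ[\MZ^n]$ is not polynomial-time linear algebra. The paper avoids this entirely by proving that the lifting problem is equivalent to two conditions stated purely in abelianized data (Proposition \ref{pr:solution-reduction}, resting on the computation $\ker\phi\cap\ker\tau_L=H_L'$ of Lemma \ref{lm:kernels-intersection}); no linear algebra over the group ring is ever performed. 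Your second claim, that polynomial bounds on the $\bar\si(v)$ ``follow by putting the linear system into a canonical normal form and applying integer-programming-style bounds,'' also fails: the resulting conditions are not linear in the unknown abelianizations. Condition \eqref{eq:second-homology-condition} is the skew-symmetric \emph{quadratic} constraint $\sum_i\bar u_i\wedge\bar v_i = h \bmod K$, and condition \eqref{eq:coefficient-condition} involves the translation action $\bar w_i\,\tau_L(c_i)$, which depends exponentially on $\bar w_i$. Bounding the certificate is the technical heart of the paper and requires the cluster decomposition of Proposition \ref{pr:bounding-w} for the $\bar w_i$ and the Hermite-reduced-basis and symplectic-transformation machinery of Propositions \ref{pr:L-bound} and \ref{pr:solution-bound} for the $\bar u_i,\bar v_i$; your proposal has no substitute for either.

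On the hardness half: the paper does not construct a reduction at all, but inherits $\NP$-hardness from the $g=0$ (spherical) case established in \cite{Lysenok_Ushakov:2016}, so hardness is a one-line citation. Your plan to build a Subset Sum reduction from scratch with $g=0$ is a reasonable direction and is in the spirit of how such results are obtained, but as written it is a program rather than a proof --- you correctly identify the rigidity of the conjugator abelianizations as the crux, and that crux is left open. As it stands, neither half of the proposal closes.
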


\begin{theorem}\label{th:main_theorem3}
For fixed $g$ and $m$, the Diophantine problem 
for orientable quadratic equations 
in $M_n$ can be solved by a polynomial time algorithm.
\end{theorem}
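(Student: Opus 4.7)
The plan is to encode the equation \eqref{eq:equation} via the Magnus embedding $M_n \hookrightarrow \MZ^n \wr \MZ^n$, which translates it into a pair of conditions — a linear Diophantine equation in $\MZ^n$ on the abelianizations, and a vector equation over the group ring $\MZ[\MZ^n]^n$ on the Fox derivatives — and then exploit the fact that for fixed $g, m$ the structural complexity of this system is bounded.

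After applying Proposition~\ref{pr:standard-quadratic} to reduce to the standard form \eqref{eq:equation}, I identify each element $w \in M_n$ with the pair $(\bar w, Dw)$, where $\bar w \in \MZ^n$ is the abelianization and $Dw = (\partial_1 w, \dots, \partial_n w) \in \MZ[\MZ^n]^n$ is the vector of Fox derivatives, coupled by the fundamental identity $\sum_i (t_i - 1) \partial_i w = \bar w$ (suitably normalized). In this language \eqref{eq:equation} decomposes into: (a) a linear Diophantine condition in $\MZ^n$ — which in fact only restricts the coefficients $\bar c_i$, since commutators abelianize to $0$ and conjugation does not change abelianization; and (b) a linear equation in $\MZ[\MZ^n]^n$ whose coefficients are Laurent monomials $t^{\bar u}$ with exponents linearly depending on the as-yet-unknown abelianizations $\bar x_i, \bar y_i, \bar z_j$.

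Once the abelianizations are fixed, (b) is $\MZ[\MZ^n]$-linear in the Fox-derivative unknowns. The main technical step is to establish a \emph{support bound}: any solvable instance admits a solution whose Fox-derivative data $Dx_i, Dy_i, Dz_j$ are supported on a subset of $\MZ^n$ of polynomial size. Because \eqref{eq:equation} contains only $g$ commutator factors and $m$ conjugate factors — a bounded number — a combinatorial analysis of how the supports translate and superpose along these blocks yields such a bound in terms of the supports of the $D c_i$, of $n$, and of the chosen abelianizations. Once the supports are bounded, equation (b) reduces to a linear Diophantine system in polynomially many $\MZ$-unknowns, solvable in polynomial time by Smith normal form.

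It remains to handle the unknown abelianizations. For fixed $g, m$, the existence of a support-bounded solution to (b) translates into a system of linear constraints on the $(2g+m)n$ abelianization coordinates; I expect this system to be solvable directly by polynomial-time linear algebra over $\MZ$, or, should inequality constraints appear, by Lenstra's fixed-dimension integer programming after reducing the effective dimension by exploiting the translation symmetries of \eqref{eq:equation}. The hard part will be the support bound of the previous step: the $\MZ^n$-action on $\MZ[\MZ^n]^n$ permits Fox-derivative supports to spread unboundedly a priori, and controlling this spread requires a delicate geometric argument in the lattice $\MZ^n$ that critically uses the hypothesis that only boundedly many commutator and conjugate blocks interact — a hypothesis that fails when $g$ or $m$ is part of the input, consistently with the $\NP$-hardness asserted in Theorem~\ref{th:main_theorem2}.
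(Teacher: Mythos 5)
Your starting point --- the Magnus embedding / Fox-derivative encoding --- is the same as the paper's (the paper uses the equivalent language of $1$-chains $\si(w)$ in the Cayley graph of $\MZ^n$), and the split into an abelianized condition plus a group-ring-linear condition parallels the paper's maps $\tau_L$ and $\phi$. But there are two genuine gaps. First, your central technical claim, the polynomial support bound on the Fox-derivative data of some solution, is only asserted; you yourself flag it as ``the hard part.'' The paper avoids needing any such bound on the derivative parts of the unknowns: Proposition~\ref{pr:solution-reduction} shows that as the lifts of a fixed abelianized tuple vary, the value $W(Y)$ sweeps out an entire coset of the subgroup $H_L'=\ker\phi\cap\ker\tau_L$, so solvability is decided purely at the abelianization level and the derivative parts of $x_i,y_i,z_j$ never need to be exhibited or bounded.

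Second, and more fatally, your final step asserts that the residual constraints on the $(2g+m)n$ abelianization coordinates form ``a system of linear constraints'' solvable ``by polynomial-time linear algebra over $\MZ$.'' This is false. The essential constraint is
$$
  \bar u_1 \wedge \bar v_1 + \dots + \bar u_g \wedge \bar v_g = h \bmod K,
$$
which is \emph{bilinear} in the unknown abelianizations (each coordinate of $\bar u_i\wedge\bar v_i$ is a $2\times2$ minor $t_is_j-t_js_i$, see \eqref{eq:wedge-coord}), i.e.\ a system of quadratic Diophantine equations; moreover the subgroup $L$ generated by the unknowns, which determines both $K$ and the condition $\pi_L(\de)=0$, depends on the unknowns in a highly nonlinear way. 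Neither plain linear algebra nor Lenstra's algorithm applies. Bounding a minimal solution of this quadratic system is exactly where the paper's work lies: it uses the action of the symplectic group $\Sp(2g,\MZ)$ on the tuple $(\bar u_i,\bar v_i)$, Hermite-reduced bases and volume estimates to restrict $L$ to an effectively computable finite set (Proposition~\ref{pr:L-bound}), and a norm bound on a symplectically reduced solution (Proposition~\ref{pr:solution-bound}). Nothing in your proposal replaces this machinery, so the argument does not close.
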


In fact, the proof of Theorem \ref{th:main_theorem} in Section \ref{se:complexity}
yields a slightly stronger result.
It works for the uniform version of the Diophantine problem 
in free metabelian groups.

\medskip\noindent
\textbf{Uniform Diophantine problem} for a class of groups $\CG$ 
and a class of equations $\CC$.
Given a group $G\in \CG$ and an equation $W=1$ from $\CC$ with coefficients
from $G$ decide if $W=1$ has a solution, or not.

\begin{theorem}\label{th:main_theorem4}
The uniform Diophantine problem 
for orientable quadratic equations in free metabelian
groups is solvable and, furthermore, $\NP$-complete.
\end{theorem}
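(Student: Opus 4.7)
The plan is to derive Theorem~\ref{th:main_theorem4} essentially as a sharper reading of Theorems~\ref{th:main_theorem} and~\ref{th:main_theorem2}. The $\NP$-hardness half is immediate: the non-uniform problem in any single fixed $M_n$ with $n \ge 2$ is already $\NP$-hard by Theorem~\ref{th:main_theorem2}, and it is a special case of the uniform problem (hard-code the group input to that particular $M_n$). So any polynomial-time reduction witnessing $\NP$-hardness at fixed rank automatically witnesses $\NP$-hardness of the uniform problem.

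For decidability together with $\NP$-membership, the strategy is to revisit the algorithm produced by the proof of Theorem~\ref{th:main_theorem} in Section~\ref{se:complexity} and verify that its running time, and more importantly the size of the nondeterministic certificate it uses, are polynomial in the combined bit-length of the input $(n, W)$ rather than polynomial in $|W|$ with $n$ held fixed. Proposition~\ref{pr:standard-quadratic} is already uniform: it reduces an arbitrary orientable quadratic equation to standard form \eqref{eq:equation} in $O(|W|^2)$ time with no dependence on the ambient group, so we may assume from the outset that the equation is standard. The algorithm for Theorem~\ref{th:main_theorem} then guesses a certificate (encoding the would-be images of the variables $x_i, y_i, z_i$ in $M_n$) and verifies that substituting this certificate into the two sides of \eqref{eq:equation} yields equal elements of $M_n$. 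Both steps --- producing the certificate and verifying the equality --- need to be checked to remain polynomial in the description of $n$ as well as in the length of the equation.

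The main obstacle will be controlling the size of the certificate uniformly in $n$. Elements of $M_n$ are naturally encoded by data whose size grows with $n$ (via, e.g., an abelianized part in $\MZ^n$ together with an integer-valued flow on the Cayley graph of $\MZ^n$), so one must confirm that the solution witnesses guessed by the algorithm have bit-length polynomial in $n$ and $|W|$ combined. Concretely, each quantitative bound used in the proof of Theorem~\ref{th:main_theorem} --- bounds on the exponents carried by the $z_i$, on the supports of the flows associated with the commutator product on the left of \eqref{eq:equation}, and on the integer linear-algebraic data used to reconcile the abelianized parts of the two sides --- must be inspected and shown to depend polynomially, not exponentially, on $n$. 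Once these uniform polynomial bounds are in place, the same nondeterministic polynomial-time algorithm simultaneously yields decidability and $\NP$-membership for the uniform problem, and together with the first paragraph this gives the stated $\NP$-completeness.
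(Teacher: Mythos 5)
Your overall strategy is the same as the paper's: $\NP$-hardness is inherited from the fixed-rank case (Theorem \ref{th:main_theorem2}, ultimately from \cite{Lysenok_Ushakov:2016}), and membership in $\NP$ comes from rereading the complexity analysis of Section \ref{se:complexity} with the input size $N=\sum_i|c_i|+m+n$ explicitly including $n$. But two concrete points in your plan diverge from what that analysis actually establishes, and as written they are gaps. First, the certificate is not ``the images of $x_i,y_i,z_i$ in $M_n$'' verified by substituting into \eqref{eq:equation}. The paper never bounds the size of an actual solution in $M_n$: Proposition \ref{pr:solution-reduction} only guarantees that a suitable abelianized tuple \emph{can be lifted}, without exhibiting a short lift. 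The certificate actually used consists of the abelianized data --- the tuple $(\bar w_1,\dots,\bar w_m)$, a generating set for the subgroup $L$, and the coefficient matrix $T$ of the $\bar u_i,\bar v_i$ in a normalized basis of $R=L/Q$ --- and the verification checks the reduced conditions \eqref{eq:constants-reduction} and \eqref{eq:reduction-mod-Q}, not an identity in $M_n$. A direct verification in $M_n$ would require an additional, nontrivial bound on the length of lifts that the paper does not provide.

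Second, your test that the quantitative bounds ``depend polynomially, not exponentially, on $n$'' is the wrong one, and the paper's bounds fail it: they have the form $O(2^{f(n)}N^{g(n)})$ with $f,g$ polynomial in $n$ (for instance $2^{r^2}(\|h\|+1)$ with $r\le n$ for the solution matrix, and $C^{n^2}N^{n+1}$ for the basis of $L$), so the \emph{magnitudes} are genuinely exponential in $n$. What makes the uniform argument work is that the certificate comprises polynomially many (in $n$ and $m$) integers, each of bit-length $O(f(n)+g(n)\log N)$, which is polynomial in the input size because $n\le N$; the exhaustive-search spaces are exponential, which is why only the fixed-$(g,m,n)$ case yields a polynomial-time algorithm (Theorem \ref{th:main_theorem3}) while the uniform case yields only $\NP$. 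Executed literally, your plan would conclude at this step that the bounds are too large, when in fact they are exactly of the right shape.
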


\subsection{Known facts on equations in free metabelian groups}
In general, for every $n\ge 2$ the Diophantine problem for
equations of the form $W(x_1,\ldots,x_k)=c$ in $M_n$ 
is undecidable.
By \cite{Roman'kov_1979}, there exist a word $W=W(x_1,\ldots,x_k)$ and an element 
$u\in M_n$ such that the set of integers 
$\{l\in\MZ \mid W=u^l \text{ has a solution}\}$ 
is non-recursive.
In particular, this implies undecidability of the general 
Diophantine problem for $M_n$.
In \cite{Lysenok_Ushakov:2016} the authors proved that
the problem is solvable and, moreover, is $\NP$-complete
for the special class of quadratic equations in $M_n$ of the 
form \eqref{eq:orientable-quadratic} with $g=0$.

It is known that metabelian groups have finite commutator width.
This fact was first established by Rhemtulla in \cite{Rhemtulla-1969} 
who proved that every finitely generated solvable of class $\le3$ 
group has a finite commutator width. 
%
The precise value of the commutator width of $M_n$ is ~$n$.
This is a consequence of a more general result by 
Akhavan-Malayeri and Rhemtulla \cite{MalRhe-1998} and for 
$n \ge 3$ follows also
from several earlier results, see section 7.1 in \cite{Roman'kov:2012}.
In connection with our Theorem \ref{th:main_theorem} this means that 
for an equation of the form \eqref{eq:orientable-quadratic} in $M_n$ 
we can always assume that $g \le n$.

\section{Preliminaries}

\subsection{Elements of $M_n$ as 1-cycles}
We fix a set $\set{a_1,a_2,\dots,a_n}$ of free generators for $M_n$.
By $A_n$ we denote the free abelian group $M_n / M_n'$ and
by $x \mapsto \bar x$ the canonical epimorphism $M_n \to A_n $.
In particular, $\set{\bar a_1, \dots, \bar a_n}$ is a basis for $A_n$. 
We use additive notation for $A_n$.
The Cayley graph of $A_n$ with respect to the generating set $\set{\bar a_1, \dots, \bar a_n}$ is denoted $\Ga_n$.
We identify vertices of $\Ga_n$ with elements of $A_n$ and
assume that directed edges of $\Ga_n$ are labeled 
with letters $a_i^{\pm1}$.

A word $w \in F_n$ determines a unique edge path $p_w$ in $\Ga_n$ labeled by ~$w$
which starts at ~$1$ (the vertex corresponding to the identity of ~$A_n$). It defines a $1$-chain
$\si(w)$ in $\Ga_n$ which is the algebraic sum of all edges traversed by $p_w$; the sum of two mutually inverse directed edges is defined to be ~0.
It is well known that the mapping $w \mapsto \si(w)$ induces a well-defined injective map of $M_n$ to 
the group $C_1(\Ga_n)$ of 1-chains of $\Ga_n$ over $\Ints$;
that is, two words $u$ and $w$ define the same element of $M_n$ if and only $\si(u) = \si(w)$
(see \cite{DLS,Vershik_Dobrynin:2004,Miasnikov_Romankov_Ushakov_Vershik:2010}).
For $g \in M_n$,
we use the same notation $\si(g)$ for the image of $g$ under the induced map.
It is an easy exercise to check that for any $g,h \in M_n$,
\begin{equation} \label{eq:sigma-of-product}
  \si(gh) = \si(g) + \bar g \si(h),
\end{equation}
where $\bar g \si(h)$ is obtained by shifting $\si(h)$ by $\bar g$ (via the action of $A_n$ on $\Ga_n$).
In particular, we have $\si(g^{-1}) = - \bar g \si(g)$ and 
the action of $A_n$ on $C_1(\Ga_n)$ agrees with
conjugation in $M_n$:
$$
    \si(g h g^{-1}) = \bar g \si(h).
$$
In additive notation for $A_n$, we have
$$
    (\bar{g}_1 + \bar{g}_2) \si(h) = \bar{g}_1 (\bar{g}_2 \si(h)),
$$
and, obviously, for the boundary $\bd_1 \si(g)$ we have
$$
    \bd_1 \si(g) = \bar g - 1.
$$
This implies that $g \in M_n'$ if and only if $\si(g)$ is a 1-cycle.
Since $\si(gh) = \si(g) + \si(h)$ if $g \in M_n'$, $\si$ 
induces an isomorphism between $M_n'$ and the group $Z_1(\Ga_n)$
of $1$-cycles of $\Ga_n$.

If $L$ is a subgroup of $A_n$, then by 
$\tau_L(w)$ we denote the projection of $\si(w)$ in 
the quotient $\Ga_n/L$:
$$
  \tau_L: M_n \overset\si\to C_1(\Ga_n) \to C_1(\Ga_n/L).
$$
Now, with a collection of elements $h_1,\dots,h_r \in M_n$ we
associate the subgroup $L = \gp{\bar h_1,\dots, \bar h_r}$ of $A_n$
and two subgroups of $M_n$:
\begin{align*}
H_L' &= \gp{[[a_i, a_j]^g, h_k], \mbox{ where } 1\le i,j\le n,\ 1\le k\le r,\ g \in M_n},\\
H_L & = 
\left\langle
\begin{array}{l}
[[a_i, a_j]^g, h_k], \\
{}[h_i, h_j],
\end{array}
\begin{array}{l}
\mbox{ where }1\le i,j\le n,\ 1\le k\le r,\ g \in M_n \\
\mbox{ where }1\le i,j\le r
\end{array}
\right\rangle.
\end{align*}
The following proposition describes elements of $M_n$ vanishing under ~$\tau_L$.

\begin{proposition}[See {{\cite[Proposition 2.2]{Lysenok_Ushakov:2016}}}] \label{prop:trace-kernel}
$\ker(\tau_L) = H_L$.
\qed
\end{proposition}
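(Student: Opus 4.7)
The plan is to pass to the additive setting via $\si$ and exploit the $\Ints[A_n]$-module isomorphism $\si|_{M_n'}\colon M_n' \to Z_1(\Ga_n)$ established in the preceding discussion. Write $R = \Ints[A_n]$, let $I_L \seq R$ be the ideal generated by $\{\,l - 1 : l \in L\,\}$ (so that the kernel of $C_1(\Ga_n) \to C_1(\Ga_n/L)$ equals $I_L \cdot C_1(\Ga_n)$), and let $I_\epsilon \seq R$ be the augmentation ideal.

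\emph{Easy inclusion $H_L \seq \ker \tau_L$.} Iterating \eqref{eq:sigma-of-product} and the identity $\si(g^{-1}) = -\bar g^{-1}\si(g)$, one obtains
\[
  \si([h_i,h_j]) \;=\; (\bar h_i - 1)\si(h_j) - (\bar h_j - 1)\si(h_i),
\]
\[
  \si([[a_i,a_j]^g, h_k]) \;=\; (1 - \bar h_k)\,\bar g^{-1}\si([a_i,a_j]).
\]
Since $\bar h_i, \bar h_j, \bar h_k \in L$, both expressions lie in $I_L \cdot C_1(\Ga_n)$, and hence project to $0$ in $C_1(\Ga_n/L)$.

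\emph{Reverse inclusion $\ker \tau_L \seq H_L$.} The first step is to show $\ker \tau_L \seq M_n'$. For $g \in \ker \tau_L$, the hypothesis $\si(g) \in I_L C_1(\Ga_n)$ implies $\bar g - 1 = \bd_1 \si(g) \in I_L \cdot I_\epsilon \seq I_\epsilon^2$; using the identification $R / I_\epsilon^2 \cong \Ints \oplus A_n$, which sends $\bar g - 1$ to $\bar g$, this forces $\bar g = 0$. Once $g \in M_n'$, the problem reduces to showing that every element of $Z_1(\Ga_n) \cap I_L C_1(\Ga_n)$ lies in $\si(H_L)$. The computations of the easy inclusion already give $\si(H_L') = I_L \cdot Z_1(\Ga_n)$ and $\si(H_L) = I_L \cdot Z_1(\Ga_n) + N$, where $N$ is the additive subgroup of $Z_1(\Ga_n)$ spanned by the wedge-type cycles $(\bar h_i - 1)\si(h_j) - (\bar h_j - 1)\si(h_i)$. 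Tensoring the short exact sequence $0 \to Z_1(\Ga_n) \to C_1(\Ga_n) \to I_\epsilon \to 0$ with $R/I_L$ identifies the deficit $(Z_1 \cap I_L C_1) / I_L Z_1$ with (the image of) $\operatorname{Tor}_1^R(R/I_L, I_\epsilon) = \operatorname{Tor}_2^R(R/I_L, \Ints)$. Since $A_n \cong \Ints^n$, this group is computed by the Koszul complex of $\Ints$ over $R$, and one checks that every class is represented by a sum of second-boundary elements of exactly the form $(\bar h_i - 1)\si(h_j) - (\bar h_j - 1)\si(h_i)$ once the $\bar h_k$'s are taken to generate $L$.

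\emph{Main obstacle.} The delicate step is matching the $\operatorname{Tor}$ contribution precisely with the $[h_i, h_j]$-generators: the $h_k$'s are arbitrary elements of $M_n$ rather than basis vectors of $A_n$, and the syzygies among the $\bar h_k - 1$ in $R$ must be tracked, typically through a Smith normal form for $L \seq A_n$. A more elementary alternative I would also consider is a direct combinatorial reduction on cycles: write $z \in Z_1 \cap I_L C_1$ as a $\Ints[A_n]$-combination of translates of the elementary squares $\si([a_i,a_j])$, then successively subtract cycles from $\si(H_L)$, absorbing the Koszul 3-cell relations among squares, until the support vanishes, thereby exhibiting the preimage $g$ explicitly as an element of $H_L$.
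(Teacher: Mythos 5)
First, note that this paper does not actually prove Proposition \ref{prop:trace-kernel}; it is quoted with a \verb|\qed| from \cite[Proposition 2.2]{Lysenok_Ushakov:2016}, so there is no in-paper argument to compare against. Judged on its own merits, your write-up gets the framework right but stops short of the one step that carries all the content. The easy inclusion $H_L \seq \ker\tau_L$ is a correct computation (up to harmless sign/unit discrepancies coming from commutator conventions), the observation that $\ker(C_1(\Ga_n)\to C_1(\Ga_n/L)) = I_L C_1(\Ga_n)$ is right, and the argument that $\si(g)\in I_LC_1$ forces $\bar g - 1\in I_\epsilon^2$ and hence $g\in M_n'$ is clean and correct. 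Likewise $\si(H_L') = I_L Z_1(\Ga_n)$ holds because $Z_1(\Ga_n)$ is generated as a $\Ints[A_n]$-module by the $\si([a_i,a_j])$ and $I_L$ is generated as an ideal by the $\bar h_k - 1$. The homological identification $(Z_1\cap I_LC_1)/I_LZ_1 \cong \operatorname{Tor}_1^R(R/I_L, I_\epsilon)\cong \operatorname{Tor}_2^R(R/I_L,\Ints)$ is also valid, and (via Shapiro's lemma, since $R/I_L\cong\Ints[A_n/L]$ and $R$ is free over $\Ints[L]$) this group is $H_2(L;\Ints)\cong\La^2(L)$ --- which is exactly why the extra generators $[h_i,h_j]$ must appear in $H_L$.

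The gap is the sentence ``one checks that every class is represented by a sum of second-boundary elements of exactly the form $(\bar h_i-1)\si(h_j)-(\bar h_j-1)\si(h_i)$.'' What you actually need is that the specific homomorphism $\La^2(L)\to (Z_1\cap I_LC_1)/I_LZ_1$ sending $\bar h\wedge\bar h'$ to the class of $\si([h,h'])$ (which is well defined and bilinear, as one can verify) is \emph{surjective}. Knowing that source and target are abstractly isomorphic free abelian groups does not give this: an injective non-surjective endomorphism of $\La^2(L)$ is entirely possible a priori, so you must chase the class of $\si([h,h'])$ through the two connecting homomorphisms (or compare the Koszul resolution of $\Ints$ over $R$ with the bar resolution of $L$) and verify it maps to $\pm\,\bar h\wedge\bar h'$ under the composite isomorphism. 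That verification is the entire nontrivial content of the hard inclusion $\ker\tau_L\seq H_L$, and your own ``Main obstacle'' paragraph concedes that it is not done, offering instead two unexecuted alternatives (Smith normal form bookkeeping, or a direct combinatorial descent on supports). As it stands, the proposal is a correct and well-chosen reduction of the proposition to its essential difficulty, not a proof of it.
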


{\em Remark.} The technique of representing
elements in $M_n$ as $1$-chains in the Cayley graph of $\MZ^n$
is not new and was introduced several times under different names,
see for example \cite{Miasnikov_Romankov_Ushakov_Vershik:2010, Vershik_Dobrynin:2004}. Equivalently, we could use the language
of Fox derivatives: it can be easily checked that for any word
$w$ representing an element $g \in M_n$
$$
    \si(g) = \sum_i \frac{\partial w}{\partial a_i} \si(a_i)
$$
where $\frac{\partial w}{\partial a}$ is the Fox derivative of $w$
with respect to a generator $a$ of ~$M_n$ and  
images $\si(a_i)$ of generators $a_i$ form
a generating set for $C_1(\Ga_n)$ as an $\Ints M_n$-module.


\subsection{External square}
As a convenient tool, we use the following well known construction.
Let $X$ be an $R$-module over a commutative ring $R$ with identity. 
The {\em external square} $\La^2(X)$
of $X$ defined as the skew-symmetric quotient of the tensor square $X \otimes X$, i.e.\ the quotient over the submodule 
generated by all elements $x \otimes y + y \otimes x$ 
and $x \otimes x$. 
By construction, the map $(x,y) \mapsto x \otimes y$
induces the skew-symmetric bilinear map $X \times X \to \La^2(X)$, 
the wedge product $x \wedge y$.
If $X$ is a free $R$-module with basis $B = \set{b_1,\dots,b_k}$ then $\La^2(X)$ is a free $R$-module with basis
$$
    B^{\wedge2} = \setof{b_i \wedge b_j}{1 \le i < j \le k}.
$$

We consider $\La^2 (A_n)$ as the external square of the $\MZ$-module $A_n$. 
It is a free abelian group with basis $\setof{\bar a_i \wedge \bar a_j}{1 \le i < j \le n}$
and hence is isomorphic to the second factor $M_n' / [M_n',M_n]$ of the upper central series of $M_n$
where $\bar a_i \wedge \bar a_j$ maps to the image of the basis commutator $[a_i,a_j]$.
Thus, we have an epimorphism $\phi: M_n' \to \La^2(A_n)$ given by 
$$
  \phi([a_i,a_j]^g) = \bar a_i \wedge \bar a_j \text{ for any } g \in M_n.
$$
Since metabelian groups satisfy the identities
\begin{equation} \label{eq:comm-linearity}
  [u v,x] = [u,x][v,x], \quad [x, u v] = [x,u][x,v],
\end{equation}
together with linearity of the wedge product we obtain
$$
  \phi([g,h]) = \bar g \wedge \bar h.
$$
If $\set{b_1,b_2,\dots,b_n}$ is a basis for $A_n$
and $u,v \in A_n$ are expressed as
$$
    u = \sum_{i=1}^n t_i b_i \quad\text{and}\quad 
    v = \sum_{i=1}^n s_i b_i,
$$ 
then the definition implies
\begin{equation} \label{eq:wedge-coord}
    u \wedge v = \sum_{1 \le i < j \le n} (t_i s_j - t_j s_i) 
    \, b_i \wedge b_j.
\end{equation}


\begin{lemma} \label{lm:kernels-intersection}
In the notation of Proposition \ref{prop:trace-kernel}, 
$\ker(\phi) \cap \ker(\tau_L) = H_L'$.
\end{lemma}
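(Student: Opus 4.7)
The plan is to use Proposition~\ref{prop:trace-kernel} to restate the claim as $\ker(\phi) \cap H_L = H_L'$, and then to exhibit the isomorphism $H_L/H_L' \to \La^2(L)$ induced by $\phi$, where $L = \gp{\bar h_1, \dots, \bar h_r}$. The inclusion $H_L' \subseteq \ker(\phi) \cap H_L$ is immediate: each generator $[[a_i,a_j]^g, h_k]$ of $H_L'$ lies in $H_L$, and $\phi([[a_i,a_j]^g, h_k]) = \overline{[a_i,a_j]^g} \wedge \bar h_k = 0$ since $[a_i,a_j]^g \in M_n'$ has trivial image in $A_n$.

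For the reverse inclusion, since $M_n'$ is abelian (by metabelianness of $M_n$), I work additively. Every $x \in H_L$ can be written as $x = y + \sum_{i<j} n_{ij}[h_i, h_j]$ with $y \in H_L'$, and then $\phi(x) = \sum n_{ij}\,\bar h_i \wedge \bar h_j$. Hence $x \in H_L \cap \ker\phi$ yields a vanishing relation $\sum n_{ij}\, \bar h_i \wedge \bar h_j = 0$ in $\La^2(A_n)$, and the theorem reduces to the following key claim: \emph{any such vanishing relation forces $\sum n_{ij} [h_i, h_j] \in H_L'$.} Granting this, the assignment $\bar h_i \wedge \bar h_j \mapsto [h_i,h_j] + H_L'$ becomes a well-defined two-sided inverse to the induced map $H_L/H_L' \to \La^2(L)$.

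To prove the key claim I first record two auxiliary facts. By \eqref{eq:comm-linearity} and the abelianness of $M_n'$, expanding $u = \prod_\ell [a_{i_\ell}, a_{j_\ell}]^{g_\ell}$ gives $[u, h_k] = \prod_\ell [[a_{i_\ell}, a_{j_\ell}]^{g_\ell}, h_k] \in H_L'$ for every $u \in M_n'$, so $[M_n', \gp{h_1,\dots,h_r}] \subseteq H_L'$; and a short module computation using $\overline{h_k^v} = \bar h_k$ (so that $[u, h_k^v] = [u, h_k]$ for $u \in M_n'$) shows $H_L'$ is normal in $M_n$, and in particular $A_n$-invariant. Next, Smith normal form shows that $L \hookrightarrow A_n$ induces an injection $\La^2(L) \hookrightarrow \La^2(A_n)$, so the kernel of the surjection $\La^2(\Ints^r) \to \La^2(L)$ sending $e_i \wedge e_j \mapsto \bar h_i \wedge \bar h_j$ is generated by elements $v \wedge e_j$ with $v = \sum m_i e_i$ satisfying $\sum m_i \bar h_i = 0$.

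For such a $v$, set $h := \prod h_i^{m_i}$; the relation $\sum m_i \bar h_i = 0$ gives $h \in M_n'$, hence $[h, h_j] \in H_L'$ by the first auxiliary fact. On the other hand, iterated application of $[uv, w] = [u, w]^v [v, w]$ with additive bookkeeping in $M_n'$ produces $[h, h_j] \equiv \sum_i m_i [h_i, h_j]$ modulo error terms of the shape $(\bar w - 1) \cdot u'$ with $u' \in M_n'$ and $\bar w \in L$; each such term equals $[u', w] \in [M_n', \gp{h_1, \dots, h_r}] \subseteq H_L'$ by the first auxiliary fact. Combining the two congruences shows $\sum m_i [h_i, h_j] \in H_L'$, which proves the key claim. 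The main point requiring care is bookkeeping the error terms --- during the reduction only products of the $h_k$'s play the role of conjugating elements, so the errors stay in $[M_n', \gp{h_1,\dots,h_r}]$ rather than the larger $[M_n', M_n]$, where the argument would break down.
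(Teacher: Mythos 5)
Your proof is correct, and although it shares the paper's skeleton --- both invoke Proposition \ref{prop:trace-kernel} to restate the claim as $\ker(\phi)\cap H_L=H_L'$ and both use the additive decomposition $H_L=H_L'+\sum\MZ\,[h_i,h_j]$ inside the abelian group $M_n'$ --- you execute the key step by a genuinely different route. The paper first normalizes the generators so that $\set{\bar h_1,\dots,\bar h_r}$ is a basis of $L$ (justifying this only by a pointer to the identities \eqref{eq:comm-linearity}) and then observes that the wedges $\bar h_i\wedge\bar h_j$ are linearly independent in $\La^2(A_n)$, so no nontrivial relation $\sum n_{ij}\,\bar h_i\wedge\bar h_j=0$ can arise at all. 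You instead keep an arbitrary generating tuple, describe all relations among the $\bar h_i\wedge\bar h_j$ via the presentation $\La^2(L)\cong\La^2(\MZ^r)/\gp{N\wedge\MZ^r}$ (where $N$ is the kernel of $e_i\mapsto\bar h_i$), combined with the injectivity of $\La^2(L)\to\La^2(A_n)$, which you correctly extract from Smith normal form, and then show each relation lifts into $H_L'$ through the congruence $\prod_i[h_i,h_j]^{m_i}\equiv\bigl[\prod_i h_i^{m_i},h_j\bigr]$ modulo $[M_n',\gp{h_1,\dots,h_r}]\seq H_L'$; your bookkeeping of the error terms $(\bar w-1)u'$ with $\bar w\in L$ is exactly the point that makes this work. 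Your argument is longer but more self-contained: it avoids the ``without loss of generality'' reduction whose verification the paper leaves implicit, and it delivers the sharper structural fact $H_L/H_L'\cong\La^2(L)$; the paper's version is shorter at the price of that normalization step.
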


\begin{proof}
The definition of $\phi$ implies $H_L' \seq \ker\phi$.
Due to identities \eqref{eq:comm-linearity} we may assume without loss of generality that 
$\set{\bar h_1,\dots, \bar h_r}$ is a basis for $L$. 
Then $H_L$ is generated by $H_L'$ and commutators $[h_i,h_j]$, $1 \le i < j \le r$.
Now observe that the images 
$\setof{\bar h_i \wedge \bar h_j}{1 \le i < j \le r}$ of these commutators in $\La^2(A_n)$ are a basis for a free 
abelian subgroup 
(to see this, for example, we can enlarge the basis $\set{\bar h_i}$ to obtain a basis for $\MQ^n \simeq A_n \otimes \MQ$;
then $\set{\bar h_i \wedge \bar h_j}$ is a part of a basis for $\La^2(\MQ^n)$).
This implies that $H_L \cap \ker\phi$ contains no elements outside $H_L'$.
\end{proof}

\section{Abelian reduction}

Let $Y = (u_1, v_1, \dots, u_g, v_g, w_1, \dots, w_m)$ be a tuple of elements of $M_n$.
In this section we give a necessary and sufficient condition for $Y$ to be a solution of the equation \eqref{eq:equation}, i.e., to satisfy
$$
[u_1, v_1] \dots [u_g, v_g] = w_1 c_1 w_1^{-1} \ldots w_m c_m w_m^{-1}.
$$
The condition is formulated purely in terms of the image of $Y$ in $A_n$.

Assume that $Y$ is a solution of \eqref{eq:equation}. 
Note that the left-hand side of \eqref{eq:equation}
belongs to $M_n'$ and, hence, $c_1 c_2 \dots c_m \in M_n'$.
Define a subgroup $L$ in $A_n$:
\begin{equation} \label{eq:L-def}
  L = \sgp{\bar u_1, \bar v_1, \bar u_2, \bar v_2, \dots, \bar u_g, \bar v_g, \bar c_1, \bar c_2, \dots, \bar c_m}.
\end{equation}
Applying $\tau_L$ to the both sides of \eqref{eq:equation} we get
$$
  0 = \tau_L (w_1^{-1} c_1 w_1 \dots w_m^{-1} c_m w_m) =  \sum_{i=1}^m \bar w_i \tau_L(c_i).
$$
Applying $\phi$ to the left-hand side 
and to the right-hand side of \eqref{eq:equation} we get
$$
  \phi([u_1, v_1] [u_2,v_2] \dots [u_g,v_g]) = \bar u_1 \wedge \bar v_1 + \dots + \bar u_g \wedge \bar v_g
$$ 
and
\begin{align*}
  \phi (w_1^{-1} c_1 w_1 \dots w_m^{-1} c_m w_m) &= \phi ([w_1, c_1^{-1}] \dots [w_m, c_m^{-1}]) 
      + \phi(c_1 c_2 \dots c_m) \\
    &= \bar c_1 \wedge \bar w_1 + \dots + \bar c_m \wedge \bar w_m + \phi(c_1 c_2 \dots c_m).
\end{align*}
That gives a necessary condition for $Y$ to be a solution of \eqref{eq:equation}.
Below we prove that it is also sufficient.

\begin{proposition} \label{pr:solution-reduction}
Let 
$$
    \bar Y = (\bar u_1, \bar v_1, \bar u_2, \bar v_2, \dots, \bar u_g, \bar v_g, \bar w_1, \bar w_2, \dots, \bar w_m)
$$ 
be a tuple of elements of ~$A_n$. 
Let $L$ be the subgroup of $A_n$ 
generated by all $\bar u_i$, $\bar v_i$ and $\bar c_i$.
Then $\bar Y$ can be lifted to a solution of the equation \eqref{eq:equation} 
if and only if the following conditions are satisfied:
\begin{align}
  &\sum_{i=1}^m \bar w_i \tau_L(c_i) = 0, 
		  \label{eq:coefficient-condition} \\
  &\sum_{i=1}^g \bar u_i \wedge \bar v_i =  
    \sum_{i=1}^m \bar c_i \wedge \bar w_i + \phi(c_1 c_2 \dots c_m).
		  \label{eq:second-homology-condition}
\end{align}
\end{proposition}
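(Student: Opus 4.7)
Sufficiency is what requires work. Choose arbitrary lifts $u_i^{\circ}, v_i^{\circ}, w_i^{\circ} \in M_n$ of the given $\bar u_i, \bar v_i, \bar w_i$ and form the \emph{defect}
\[
E \;=\; [u_1^{\circ}, v_1^{\circ}] \cdots [u_g^{\circ}, v_g^{\circ}] \cdot \bigl(w_1^{\circ} c_1 (w_1^{\circ})^{-1} \cdots w_m^{\circ} c_m (w_m^{\circ})^{-1}\bigr)^{-1}.
\]
The presence of $\phi(c_1 \cdots c_m)$ in condition \eqref{eq:second-homology-condition} implicitly forces $c_1 \cdots c_m \in M_n'$, so $E$ lies in $M_n'$. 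The plan is to modify the lifts within their $M_n'$-cosets until $E$ becomes trivial.

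\textbf{Step 1.} The hypotheses place $E$ in $H_L'$. Running the $\phi$- and $\tau_L$-computations from the necessity discussion on the chosen lifts instead of on a genuine solution yields
\[
\phi(E) \;=\; \sum_{i=1}^g \bar u_i \wedge \bar v_i - \sum_{i=1}^m \bar c_i \wedge \bar w_i - \phi(c_1 \cdots c_m),
\]
which vanishes by \eqref{eq:second-homology-condition}, and
\[
\tau_L(E) \;=\; -\sum_{i=1}^m \bar w_i\, \tau_L(c_i),
\]
which vanishes by \eqref{eq:coefficient-condition}. The second identity uses \eqref{eq:sigma-of-product} together with the key fact that $\bar u_i, \bar v_i, \bar c_i \in L$ act trivially on $C_1(\Ga_n/L)$, which also kills every $\tau_L$-contribution coming from a commutator $[u_i^{\circ}, v_i^{\circ}]$. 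Lemma \ref{lm:kernels-intersection} therefore gives $E \in \ker\phi \cap \ker\tau_L = H_L'$.

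\textbf{Step 2.} Modifications of the lifts realize an arbitrary element of $H_L'$. Using \eqref{eq:comm-linearity} one has $[uz,v] = [u,v] \cdot [z,v]$ for any $u,v$ and any $z \in M_n'$. Replacing $u_i^{\circ}$ by $u_i^{\circ} z$ leaves $\bar u_i$ unchanged and multiplies $E$ by $[z, v_i^{\circ}]$, which in the $\Ints[A_n]$-module structure on $M_n'$ equals $(\bar v_i - 1)\,z$; as $z$ ranges over $M_n'$ the reachable changes fill $(\bar v_i - 1) M_n'$. Symmetrically, $v_i^{\circ}$-modifications cover $(\bar u_i - 1) M_n'$, and a similar computation for $w_i^{\circ} \mapsto w_i^{\circ} z$ (after absorbing a bijective $\bar w_i$-translation of $M_n'$) covers $(\bar c_i - 1) M_n'$. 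The total reachable group of changes in $E$ is therefore
\[
\sum_{i=1}^g (\bar u_i - 1) M_n' + \sum_{i=1}^g (\bar v_i - 1) M_n' + \sum_{i=1}^m (\bar c_i - 1) M_n',
\]
which coincides with $H_L'$: this follows directly from the definition of $H_L'$, using that $L$ is generated by $\bar u_i, \bar v_i, \bar c_i$ and that the elements $[a_i,a_j]^g$ generate $M_n'$ as a $\Ints[A_n]$-module. Hence the defect $E$ from Step 1 can be cancelled, and the modified lifts form a solution of \eqref{eq:equation}.

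The main obstacle is the careful tracking of signs and the identification of the reachable submodule in Step 2: one must verify that $w_i^{\circ}$-perturbations fill \emph{all} of $(\bar c_i - 1) M_n'$ and not merely a translate or proper subgroup, which relies on invertibility of the $A_n$-action on $M_n'$. The equality $H_L' = \sum_k (\bar h_k - 1) M_n'$ with $\bar h_k$ ranging over generators of $L$ is another short but essential verification, immediate from the stated definition of $H_L'$.
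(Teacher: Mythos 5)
Your proof is correct and follows essentially the same route as the paper: show that the defect lies in $\ker(\phi) \cap \ker(\tau_L) = H_L'$ via Lemma \ref{lm:kernels-intersection}, then show that varying the lifts within their $M_n'$-cosets changes the defect by exactly the elements of $H_L'$. Your Step 2 phrases the reachable changes in module language as $\sum_k (\bar h_k - 1)M_n'$ where the paper instead exhibits the generators $[[a_i,a_j]^g, h_k]$ directly as the factors produced by the perturbations, but this is the same computation in different notation.
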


\begin{proof}
We need to prove the `if' part. 
Assume that $\bar{Y}$ satisfies the conditions \eqref{eq:coefficient-condition} and \eqref{eq:second-homology-condition}. Let 
$$
  Y = (u_1, v_1, u_2, v_2, \dots, u_g, v_g, w_1, w_2, \dots, w_m)
$$
be any lift of $\bar Y$  in $M_n$. Define
$$
 W  = [x_1, y_1] \dots [x_g, y_g] z_1 c_1^{-1} z_1^{-1} \ldots z_m c_m^{-1} z_m^{-1},
$$
and associate with $Y$ an element
$$
 W(Y)  = [u_1, v_1] \dots [u_g, v_g] w_1 c_1^{-1} w_1^{-1} \ldots w_m c_m^{-1} w_m^{-1},
$$
obtained by substituting the elements of $Y$ into $W$.
Conditions \eqref{eq:coefficient-condition} and \eqref{eq:second-homology-condition}
with Lemma \ref{lm:kernels-intersection} imply that
$$
W(Y) \in \ker(\phi) \cap \ker(\tau_L) = H_L'.
$$ 
Now, it is sufficient to prove that the set
$$
\{W(Y) \mid Y \mbox{ is a lift of } \bar{Y}\}
$$
is a coset of $H_L'$.

Recall that $H_L'$ is generated by the elements of the 
form $[[a_i,a_j]^g,x]$ where $g \in M_n$ and $x$ belongs 
to the set $\set{u_i,v_j,c_k}$. 
Fix an occurrence of $z_k c_k^{-1} z_k^{-1}$ in $W$:
$$
    W = U z_k c_k^{-1} z_k^{-1} V.
$$
Multiplication $w_k \mapsto w_k [a_i,a_j]^g$ of $w_k$
by a generator of $M_n'$ results in multiplication:
$$
    W(Y) \mapsto [[a_i,a_j]^{g U(Y)w_k},c_k] \,W(Y),
$$ 
of the value of $W$. Since $[a_i,a_j]^g$ depends only on $\bar g$,
the factor 
$$
    [[a_i,a_j]^{g U(X)w_k},c_k]
$$ 
runs over all generators
of $H_L'$ of the form $[[a_i,a_j]^g,c_k]$. In a similar way,
multiplication of $u_k$ and $v_k$ by generators of $M_n'$
produces generators of ~$H_L'$ of the form $[[a_i,a_j]^g,v_k]$
and $[[a_i,a_j]^g,u_k]$ respectively. This finishes the proof.
\end{proof}

We now examine conditions \eqref{eq:coefficient-condition} and \eqref{eq:second-homology-condition}.
Observe that  \eqref{eq:coefficient-condition} depends only on images of the elements $\bar w_i$ in $A_n/L$.
Changing $\bar w_i$'s while keeping their images in $A_n/L$ fixed adds 
to the right-hand side of \eqref{eq:second-homology-condition}
an element of the subgroup $K$ of $\La^2 (A_n)$
defined by
$$
  K = \sgp{\bar c_i \wedge g \text{ for all $i$ and } g \in L}.
$$
Furthermore, for any $k\in K$ we can
find appropriate $\bar w_i$'s that add $k$ to 
the right-hand side of \eqref{eq:second-homology-condition}.
Hence, \eqref{eq:second-homology-condition} 
can be replaced with the following condition:
\begin{equation} \label{eq:reduced-homology-condition}
\sum_{i=1}^g \bar u_i \wedge \bar v_i =  
\sum_{i=1}^m    \bar c_i \wedge \bar w_i + \phi(c_1 c_2 \dots c_m) \bmod K. 
\end{equation}
Note that $K$ is the kernel of the natural 
epimorphism $\La^2 (L) \to \La^2 (L/Q)$, where
$Q = \sgp{\bar c_1, \bar c_2, \dots, \bar c_m}$.

\subsection{Restricting to finitely many $w_i$'s}

Here we provide an effective bound on 
the size of elements $\bar w_i$.

\begin{proposition} \label{pr:bounding-w}
An equation \eqref{eq:equation} has a solution if and only if there exists a tuple $\bar Y$ of elements $\bar u_i$, $\bar v_i$ 
and $\bar w_i$ satisfying \eqref{eq:coefficient-condition},
\eqref{eq:reduced-homology-condition} and
\begin{equation}\label{eq:w_bound}
|\bar w_i| \le \sum_{j=1}^m |c_j|, \quad i=1,2,\ldots,m.
\end{equation}
\end{proposition}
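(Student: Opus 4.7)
The plan is to modify a given tuple $\bar Y$ satisfying \eqref{eq:coefficient-condition} and \eqref{eq:reduced-homology-condition} so that the bound \eqref{eq:w_bound} also holds, leaving $\bar u_i,\bar v_i$ untouched. Since both conditions depend only on the $L$-cosets of the $\bar w_i$, I have at my disposal both individual $L$-shifts of each $\bar w_i$ and, in favorable situations, a ``cluster shift'' of several $\bar w_i$'s by a common element of $A_n$. The strategy is to cluster-shift to bring each cluster near the origin and then individually $L$-shift to obtain shortest coset representatives.

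I begin with a cluster decomposition in $\Gamma_n/L$. Since $\tau_L(c_i)$ is the projection of the path of $c_i$, the support of $\bar w_i\,\tau_L(c_i)$ lies in at most $|c_i|$ edges near $\bar w_i$. Declare $i\sim j$ when $\supp(\bar w_i\,\tau_L(c_i))\cap\supp(\bar w_j\,\tau_L(c_j))\ne\eset$ and let $I_1,\dots,I_s$ be the equivalence classes of the transitive closure. Because different classes occupy disjoint edge sets, condition \eqref{eq:coefficient-condition} splits into
$$\sum_{i\in I_r}\bar w_i\,\tau_L(c_i)=0\qquad\text{for every }r.$$

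The crux, and the main technical point, is to show that this forces $\sum_{i\in I_r}\bar c_i=0$ in $A_n$ for every cluster. I will prove this via the $\MZ$-linear, $A_n/L$-invariant map $\alpha\colon C_1(\Gamma_n/L)\to A_n$ sending each edge of label $a_j$ to $\bar a_j$; it satisfies $\alpha(\tau_L(c))=\bar c$, so applying $\alpha$ to the cluster identity yields the vanishing. Consequently, shifting all $\bar w_i$ in $I_r$ by the same $\delta_r\in A_n$ preserves \eqref{eq:coefficient-condition} (the cluster sum was already $0$ and translation is linear on chains) and also preserves \eqref{eq:reduced-homology-condition}, since its right-hand side changes by $(\sum_{i\in I_r}\bar c_i)\wedge\delta_r=0$.

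To finish, pick $i_r^*\in I_r$ and apply the cluster shift $\delta_r=-\bar w_{i_r^*}$, making $\bar w_{i_r^*}=0$ in $A_n$. The union $\Omega_r=\bigcup_{i\in I_r}(\bar w_i+\supp(\tau_L(c_i)))$ is a connected subgraph of $\Gamma_n/L$ carrying at most $\sum_{i\in I_r}|c_i|$ edges, hence has metric diameter bounded by the same quantity; so every $\bar w_i$ with $i\in I_r$ has image in $A_n/L$ at distance at most $\sum_{i\in I_r}|c_i|\le\sum_{j=1}^m|c_j|$ from $0$. Replacing each $\bar w_i$ by a shortest-word representative of its coset $\bar w_i+L$ (which preserves both conditions, as noted in the text preceding the proposition) then gives an element of $A_n$ of word length equal to its $A_n/L$-distance from $0$, and hence at most $\sum_j|c_j|$, establishing \eqref{eq:w_bound}.
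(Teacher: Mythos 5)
Your proof is correct and follows essentially the same route as the paper's: the same cluster decomposition of $\{1,\dots,m\}$ by support intersection in $\Gamma_n/L$, the same observation that each cluster sum vanishes and hence $\sum_{i\in J}\bar c_i=0$ (your map $\alpha$ is exactly the paper's composition $C_1(\Gamma_n/L)\to C_1(\Gamma_n/A_n)\cong A_n$), and the same normalization making one $\bar w_i$ per cluster zero followed by the diameter bound on the union of supports. The only cosmetic difference is that you perform the common cluster shift and the individual $L$-shifts as two separate steps, whereas the paper folds both into the single substitution $\bar w_{j_t}^*=\bar w_{j_t}-\bar w_{j_1}-r_t$ with $r_t\in L$.
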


\begin{proof}
We need to prove the `only if' part.
Let $\bar Y$ be a tuple satisfying \eqref{eq:coefficient-condition} and \eqref{eq:reduced-homology-condition}.
Below we prove that $\bar Y$ can be modified to satisfy
\eqref{eq:coefficient-condition}, \eqref{eq:reduced-homology-condition}, and
\eqref{eq:w_bound}.
The argument essentially repeats the proof of 
\cite[Proposition 2.6]{Lysenok_Ushakov:2016}.

Denote $I = \set{1,\dots,m}$ and for each $i=1,\dots,m$ define
$$
  \tau_i = \bar w_i \tau_L(c_i).
$$
It follows from \eqref{eq:coefficient-condition} that
$\sum_{i=1}^m \tau_i = 0.$
Given a 1-chain $\rho \in C_1(\Ga_n/L)$,
define $\supp (\rho)$ to be the set of edges in $\Ga_n/L$
that occur in $\rho$ with a non-zero coefficient.
We call a non-empty subset $J \seq I$ a {\em cluster} if $\supp(\tau_i) \cap \supp(\tau_j) = \eset$
for any $i\in J$ and $j \notin J$. It follows from the definition that if $J_1$ and $J_2$ are clusters
and $J_1 \cap J_2\ne\emptyset$, then $J_1 \cap J_2$ is a cluster. 
Hence, $I$ can be partitioned into a
finite disjoint union of minimal clusters.

We introduce an integer-valued distance function $d(e,f)$ on the set $E(\Ga_n/L)$ of edges of $\Ga_n/L$.
By definition, the distance between edges $e$
and $f$ is the distance between their midpoints in the graph $\Ga_n/L$ (where all edges are assumed to have
length $1$). For example, $d(e,f)=1$ if and only if $e$ and $f$ are distinct and have a common  vertex.
The following statements follow directly from the definition of a cluster:
\begin{enumerate}
\item
If $J$ is a cluster, then $\sum_{i \in J} \tau_i = 0$.
\item
If $J$ is a minimal cluster, then:
$$
  \diam\left( \bigcup_{i \in J} \supp(\tau_i) \right) \le \sum_{i\in J} |c_i|.
$$
\end{enumerate}

Note that the composition 
$M_n \overset{\tau_L}\to C_1(\Ga_n/L) \to C_1(\Ga_n/A_n) \isom A_n$ gives the canonical epimorphism $g \mapsto \bar g$. Hence, (i) implies that
$$
    \ov {\prod_{i\in J} w_{i} c_{i} w_{i}^{-1}}
    = \sum_{i\in J} \bar{c}_{i} = 0.
$$

Now consider an arbitrary minimal cluster 
$J = \set{j_1,j_2,\dots,j_k}$.
By~(ii), there exist $r_2,r_3,\dots,r_k \in L$ such that
$$
    |\bar{w}_{j_t} - \bar{w}_{j_1} - r_t| \le \sum_{i\in J} |c_i|,
    \quad t=2,3,\dots,k.
$$
Define new values of $\bar{w}_i$ for $i \in J$ by
$$
    \bar{w}_{j_1}^* = 0, \quad
    \bar{w}_{j_t}^* = \bar{w}_{j_t} - \bar{w}_{j_1} - r_t, 
    \ t=2,3,\dots,k.
$$
Then
$$
    \sum_{i \in J} \bar{w}_i^* \tau_L(c_i) = 
    \bar{w}_{j_1}^{-1} \sum_{i \in J} \tau_i = 0 
$$
and
\begin{align*}
    \sum_{i \in J} \bar{c_i} \wedge \bar{w}_i^* &= 
    \sum_{i \in J} \bar{c_i} \wedge \bar{w}_i 
- \left( \sum_{i \in J} \bar{c_i} \right) \wedge \bar{w}_{j_1} 
    \pmod K \\
    &= \sum_{i \in J} \bar{c_i} \wedge \bar{w}_i . 
\end{align*}
Therefore, replacing $\ovw_i$ with $\ovw_i^\ast$  
in $\ovX$ for each $i\in J$ we 
preserve conditions \eqref{eq:coefficient-condition}
and \eqref{eq:reduced-homology-condition}.
Performing the procedure for each minimal cluster ~$J$ we achieve
the required bound 
$|\bar w_i^*| \le \sum_{i=1}^m |c_i|$ for each $i\in I$.
\end{proof}

Enumerating (finitely many) values of $\bar w_i$'s 
satisfying condition \eqref{eq:w_bound} 
we eliminate $\bar w_i$'s from 
\eqref{eq:coefficient-condition} and \eqref{eq:reduced-homology-condition},
and further reduce the equation \eqref{eq:equation}. 

\begin{corollary} \label{co:constants-reduction}
An equation \eqref{eq:equation} can be effectively reduced to
a disjunction of finitely many systems of the form
\begin{equation} \label{eq:constants-reduction}
\begin{cases}
  \sgp{\bar u_1, \bar v_1, \bar u_2, \bar v_2, \dots, \bar u_g, \bar v_g, 
    \bar c_1, \bar c_2, \dots, \bar c_m} = L, \\
  \pi_L(\de) = 0, \\
  \bar u_1 \wedge \bar v_1 + \dots + \bar u_g \wedge \bar v_g =  h \bmod K,
\end{cases}
\end{equation}
with constant entries $\bar c_1, \bar c_2, \dots, \bar c_m \in A_n$,
$\de \in C_1(\Ga_n)$, $h \in \La^2(A_n)$ and 
unknowns $\bar u_i$, $\bar v_i \in A_n$ and $L \le A_n$.
\end{corollary}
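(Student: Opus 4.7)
The plan is to combine Proposition \ref{pr:solution-reduction} with the size bound from Proposition \ref{pr:bounding-w} and to absorb the $\bar w_i$'s into constant data of a reduced system. By Proposition \ref{pr:bounding-w}, equation \eqref{eq:equation} has a solution if and only if there exist $\bar u_i,\bar v_i\in A_n$ and $\bar w_i\in A_n$ with $|\bar w_i|\le \sum_{j=1}^m|c_j|$ satisfying \eqref{eq:coefficient-condition} and \eqref{eq:reduced-homology-condition}. The set of admissible tuples $(\bar w_1,\dots,\bar w_m)$ is finite and computable from the input, so I would enumerate it and produce one branch of the final disjunction for each tuple.

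Next I would rewrite the two remaining conditions once a candidate tuple $(\bar w_1,\dots,\bar w_m)$ has been fixed. Using that $\tau_L = \pi_L\circ\si$ and that the translation action of $A_n$ on $C_1(\Ga_n)$ is compatible with the projection $\pi_L: C_1(\Ga_n)\to C_1(\Ga_n/L)$, one has
$$
    \sum_{i=1}^m \bar w_i\,\tau_L(c_i) \;=\; \pi_L\Bigl(\sum_{i=1}^m \bar w_i\,\si(c_i)\Bigr).
$$
Defining $\de := \sum_{i=1}^m \bar w_i\,\si(c_i) \in C_1(\Ga_n)$, which is now a constant 1-chain, condition \eqref{eq:coefficient-condition} takes the form $\pi_L(\de)=0$. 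Similarly, setting
$$
    h \;:=\; \sum_{i=1}^m \bar c_i\wedge\bar w_i + \phi(c_1 c_2\dots c_m) \;\in\; \La^2(A_n),
$$
condition \eqref{eq:reduced-homology-condition} becomes $\bar u_1\wedge\bar v_1 + \dots + \bar u_g\wedge\bar v_g = h \bmod K$. Finally, the first line of \eqref{eq:constants-reduction} simply records the definition \eqref{eq:L-def} of $L$ and has to stay in the system because $L$ is still an unknown; with this, both $K$ (defined from $L$ and the $\bar c_i$'s) and the projection $\pi_L$ are determined, so the system is well-posed.

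The main obstacle is really just bookkeeping. One has to check that treating $K$ and $\pi_L$ as data derived from the unknown $L$ is consistent, and that the identification $\sum_i\bar w_i\,\tau_L(c_i)=\pi_L(\de)$ is legitimate even though $\bar w_i$ is chosen in $A_n$ rather than in $A_n/L$: this is fine because \eqref{eq:coefficient-condition} depends only on $\bar w_i\bmod L$ and $\pi_L$ is $A_n$-equivariant. Everything else is routine substitution, and the equivalence of \eqref{eq:equation} with the finite disjunction of systems \eqref{eq:constants-reduction} then follows directly from Propositions \ref{pr:solution-reduction} and \ref{pr:bounding-w}.
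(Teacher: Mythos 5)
Your proposal is correct and follows essentially the same route as the paper: enumerate the finitely many tuples $(\bar w_1,\dots,\bar w_m)$ allowed by Proposition \ref{pr:bounding-w}, then for each fixed tuple rewrite \eqref{eq:coefficient-condition} as $\pi_L(\de)=0$ with $\de=\sum_i \bar w_i\,\si(c_i)$ and \eqref{eq:reduced-homology-condition} as $\sum_i \bar u_i\wedge\bar v_i = h \bmod K$ with $h=\phi(c_1\cdots c_m)+\sum_i \bar c_i\wedge\bar w_i$. The only cosmetic difference is that the paper introduces $\de$ as $\si(d)$ for $d$ the coefficient part of the equation before observing it equals $\sum_i\bar w_i\,\si(c_i)$, whereas you define it by that formula directly.
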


\begin{proof}
By Proposition \ref{pr:bounding-w}, equation \eqref{eq:equation}
is effectively reduced to the disjunction of
systems \eqref{eq:coefficient-condition} $\&$ \eqref{eq:reduced-homology-condition} for finitely 
many values of $\bar w_i$'s.
Observe that the left-hand side of \eqref{eq:coefficient-condition}
can be rewritten as $\tau_L(d)$, where 
$$
    d=w_1^{-1} c_1 w_1 \dots w_m^{-1} c_m w_m.
$$
By definition, $\tau_L(d) = \pi_L(\si(d))$,
where $\pi_L: C_1(\Ga_n) \to C_1(\Ga_n/L)$ is
the quotient map. The value of $\si(d)$
depends only on $\bar w_i$ and $c_i$ and can be expressed as
$$
    \si(d) = \sum_i \bar w_i \si(c_i).
$$
Then equation \eqref{eq:coefficient-condition} can be written as
$\pi_L(\de) = 0$, where $\de = \si(d)$, and 
equation \eqref{eq:reduced-homology-condition} can be written as
$$
    \bar u_1 \wedge \bar v_1 + \dots + \bar u_g \wedge \bar v_g =  h \bmod K,
$$
where 
$
    h=\phi(c_1 c_2 \dots c_m) + \sum_{i=1}^m \bar c_i \wedge \bar w_i.
$
\end{proof}


\subsection{Symplectic transformations}

Recall that $\bar u_i$ and $\bar v_i$ in \eqref{eq:constants-reduction} are images in $A_n$ of the variables 
in the commutator part of the initial equation \eqref{eq:equation}. The stabilizer of the conjugacy class
of the product of commutators in the free group, modulo inner automorphisms, is isomorphic to the mapping
class group of the closed orientable surface of genus ~$g$. 
This group acts on the set of abelianized 
tuples $(\bar u_1, \bar v_1, \dots, \bar u_g, \bar v_g)$
as the symplectic group $\Sp(2g,\MZ)$,
see for example \cite[Theorem 6.4]{FM}.
In particular, the set of 
tuples $\set{\ti u_i, \ti v_i}$ satisfying \eqref{eq:constants-reduction} is invariant under symplectic
transformations. It is well known that the complete set of symplectic transformations over $\MZ$ is generated by the
following transformations:
\begin{enumerate}
\item[(S1)] 
Transposition:
$
  (\bar u_i \to \bar u_j, \ \bar v_i \to \bar v_j, \ \bar u_j \to \bar u_i, \ \bar v_j \to \bar v_i).
$
\item[(S2)]
$SL_2(\Ints)$ on a pair:
$
  (\bar u_i \to \bar u_i + t \bar v_i)$ and $(\bar v_i \to \bar v_i + t \bar u_i).
$
\item[(S3)]
Mixing pairs:
$
  (\bar u_i \to \bar u_i + t \bar u_j, \ \bar v_j \to \bar v_j + t \bar v_i), \ i\ne j.
$
\item[(S4)]
Mixing pairs II:
$
  (\bar u_i \to \bar u_i + t \bar v_j, \ \bar u_j \to \bar u_j + t \bar v_i), \ i\ne j,
$\\
(deducible from (S1), (S2), and (S3)).
\end{enumerate}

\begin{remark*}
Formally speaking, we only need the fact that transformations (S1)--(S4)
preserve the value of $\bar u_1 \wedge \bar v_1 + \dots + \bar u_g \wedge \bar v_g$
and the subgroup generated by the elements $\bar u_i$ and $\bar v_i$.
This can be seen directly from the definition.
\end{remark*}

\begin{lemma} \label{lm:redution-wrt-single-vector}
Let $T = (\bar u_1,\bar v_1,\dots,\bar u_g,\bar v_g)$ be a tuple of $2g$ 
elements of $A_n$
and $\set{b_1,b_2,\dots,b_r}$ a basis of the subgroup generated by $T$.
Then $T$ can be transformed by a symplectic transformation
to a tuple $T' = (\bar u_1',\bar v_1',\dots,\bar u_g',\bar v_g')$ 
satisfying the following conditions:
$$
  u_1' = b_1 \quad\text{and}\quad \sgp{\bar v_1',\bar u_2',\bar v_2',\dots, \bar u_g', \bar v_g'} = \sgp{b_2,\dots,b_r}.
$$
\end{lemma}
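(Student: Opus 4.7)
The plan is to reduce the lemma to the classical fact that $\Sp(2g,\MZ)$ acts transitively on the set of primitive vectors of $\MZ^{2g}$. Write $B = \sgp{b_1,\dots,b_r}$ for the subgroup of $A_n$ generated by $T$, and let $\pi \colon B \to \MZ$ be the projection with $\pi(b_1) = 1$ and $\pi(b_i) = 0$ for $i \ge 2$. Applying $\pi$ coordinatewise to the entries of $T$ produces a vector
$$
    p = \bigl( \pi(\bar u_1), \pi(\bar v_1), \dots, \pi(\bar u_g), \pi(\bar v_g) \bigr) \in \MZ^{2g}.
$$
Since $T$ generates $B \ni b_1$, the gcd of the entries of $p$ equals $1$, so $p$ is primitive. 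Furthermore, every move of type (S1)--(S4) applied to $T$ induces the corresponding linear action on $p$.

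By the classical transitivity of $\Sp(2g,\MZ)$ on primitive vectors, realized by compositions of transvections of types (S1)--(S4), there is a sequence $\si$ of such moves that carries $p$ to $(1,0,\dots,0)$. Applying $\si$ to $T$ yields a tuple $T^\ast = (\bar u_1^\ast, \bar v_1^\ast,\dots,\bar u_g^\ast,\bar v_g^\ast)$ with $\pi(\bar u_1^\ast) = 1$ and $\pi$ vanishing on every other entry; equivalently, $\bar u_1^\ast = b_1 + c$ for some $c \in \sgp{b_2,\dots,b_r}$ while all remaining $2g - 1$ entries lie in $\sgp{b_2,\dots,b_r}$.

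Symplectic transformations preserve the subgroup spanned by the tuple, so $T^\ast$ still generates $B$. Setting $L' = \sgp{\bar v_1^\ast, \bar u_2^\ast, \bar v_2^\ast,\dots,\bar u_g^\ast, \bar v_g^\ast}$, the generated subgroup is $\MZ\bar u_1^\ast + L'$, and the kernel of $\pi$ restricted to this group is exactly $L'$. Hence $L' = \ker(\pi|_B) = \sgp{b_2,\dots,b_r}$, and in particular $c$ is an integer combination of $\bar v_1^\ast, \bar u_2^\ast, \bar v_2^\ast,\dots,\bar u_g^\ast,\bar v_g^\ast$. The next step is to cancel $c$ from $\bar u_1^\ast$ term by term, using (S2) for the $\bar v_1^\ast$-summand, (S3) for each $\bar u_j^\ast$-summand ($j \ge 2$), and (S4) for each $\bar v_j^\ast$-summand ($j \ge 2$). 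Each such move either affects only the first pair (S2) or modifies one other entry by a multiple of $\bar v_1^\ast \in \sgp{b_2,\dots,b_r}$; hence all remaining entries stay inside $\sgp{b_2,\dots,b_r}$, and since the moves are invertible the subgroup they generate is still $L' = \sgp{b_2,\dots,b_r}$. The resulting tuple $T'$ satisfies $\bar u_1' = b_1$ together with the required spanning condition.

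The only non-routine ingredient is producing the transformation $\si$ from the generators (S1)--(S4); this is a classical symplectic Euclidean-algorithm argument in which (S2) first reduces each intra-pair component $(\pi(\bar u_i),\pi(\bar v_i))$ to $(d_i,0)$ with $d_i = \gcd(\pi(\bar u_i),\pi(\bar v_i))$, and then (S3) across pairs drives the resulting tuple $(d_1,0,d_2,0,\dots,d_g,0)$ to $(1,0,\dots,0)$. Once $\si$ is in hand, the remainder of the proof is a straightforward cleanup.
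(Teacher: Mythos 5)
Your proposal is correct and follows essentially the same route as the paper: the paper likewise runs the Euclidean algorithm on the $b_1$-components within each pair via (S2), then acts by $SL_g$ on the $\bar u_i$'s via (S1) and (S3) to concentrate the $b_1$-coordinate in $\bar u_1$, and finally cancels the residual $P$-part of $\bar u_1$ with (S2), (S3), (S4); your projection $\pi$ and the appeal to transitivity of $\Sp(2g,\MZ)$ on primitive vectors are just a repackaging of that computation. The argument is sound, including the closing observation that the last $2g-1$ entries still generate $\sgp{b_2,\dots,b_r}$.
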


\begin{proof}
Denote $\sgp{b_2,\dots,b_r}$ by $P$.
Using (S2) we can act on each pair $(\bar u_i, \bar v_i)$ as
$SL_2(\Ints)$.
In particular, we can implement the Euclidean algorithm on
the $b_1$-components of $\bar u_i$ and ~$\bar v_i$, and
transform $T$ so that $v_i \in P$ for each $i$.
Next, using (S1) and (S3) we can act as $SL_n(\Ints)$ on 
$(\bar u_1, \bar u_2, \dots, \bar u_n)$
and change $T$ so that
$$
  \bar u_1 = b_1 + h\, \mbox{ and }\, h, \bar v_1, \bar u_2, \dots \bar v_g \in P. 
$$
The elements $\bar v_1$, $\bar u_2$, $\bar v_2$, $\dots$, $\bar v_g$ 
generate $P$. In particular, we have:
$$
  h = s_1 \bar v_1 + s_2 \bar u_1 + s_3 \bar v_2 + \dots 
    + s_{2g-1} \bar v_g,
$$
for some coefficients $s_1,\ldots,s_{2g-1}$.
Using (S2) we make $s_1=0$ and then using (S3) and (S4) with $i=1$ 
we eliminate the entire ~$h$ from $\ovu_1$.
\end{proof}

\subsection{Restricting to finitely many subgroups $L$}
\label{se:finite-set-L}
We now show that we can restrict to an effective finite set of
possibilities for $L$ in \eqref{eq:constants-reduction}.
The idea can be roughly described as follows: if $L$ is 
``too stretched'' in some direction, then 
elements $\bar u_i$ and $\bar v_i$ in \eqref{eq:constants-reduction}
(and the depending subgroup $L$) can be changed making
the dimension of $L$ smaller. To control ``the stretching
factor'' we use Hermit's reduced basis for $L$ which is, 
in some sense, ``close to orthogonal'' as much as possible.

We use the standard notations $||x||$ for the norm of 
a vector $x$ in a Euclidean normed space $\Reals^r$ and 
$x \cdot y$ for the scalar product of $x$ and ~$y$.
If $E = \set{e_1,e_2,\dots,e_r}$ is a canonical orthonormal 
basis for $\Reals^r$, then we view
$\La^2(\Reals^r)$ as a Euclidean normed space
with orthonormal basis 
$E^{\wedge2} = \setof{e_i \wedge e_j}{1 \le i < j \le r}$.

\begin{remark} \label{rm:external-square-bases}
If $E' = \set{e_1', e_2',\dots,e_r'}$ is another 
orthonormal basis for ~$\Reals^r$ then $E'^{\wedge2}$
is an orthonormal basis for $\La^2 (\Reals^r)$. This can be seen
using the fact that the orthogonal group $O(r)$ is generated
by rotations in the coordinate planes $\CS(e_i,e_j)$ and 
a single reflection $e_1 \mapsto -e_1$. Then checking 
orthonormality of $E'^{\wedge2}$ is reduced to the case 
of dimension $r \le 4$ which can be done by direct computation.
\end{remark}

\begin{proposition}[Hermite's reduced basis, see for example {\cite[Chapter~2]{NV}}] \label{lm:quasi-orthogonality}
Let $L$ be a discrete subgroup of ~$\Reals^n$, $\dim_\Ints (L) = r$. Then there exists a basis $(b_1,\dots,b_r)$ for $L$
satisfying the following. Define
$$
  L_i = \sgp{b_{i+1},b_{i+2},\dots,b_r}, \any i=0,\dots,r.
$$
Let $b_1^*$, $b_2^*$, $\dots$, $b_r^*$ be obtained from $b_1$, $b_2$,
$\dots$, $b_r$ 
by the Gram--Schmidt orthogonalization process starting from $b_r$;
that is, $b_i^*$ is the orthogonal projection of $b_i$ to
the orthogonal complement of $L_i$.
Then
\begin{equation} \label{eq:reduced-basis1}
  ||b_i^*|| \ge \frac{\sqrt3}2\, ||b_{i+1}^*||, \any i=1,\dots,r-1,
\end{equation}
and $|b_i \cdot b_j^*| \le \frac12 ||b_j^*||^2$ for $i<j$ or,
equivalently,
\begin{equation} \label{eq:reduced-basis2}
||\proj_{b_j^\ast}(b_i)|| \le \frac{||b_j^\ast||}{2}.
\end{equation}
\end{proposition}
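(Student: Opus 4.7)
The plan is to prove existence via the classical reduction procedure underlying the LLL algorithm, adapted to the reversed Gram--Schmidt convention used here (orthogonalization runs from $b_r$ down to $b_1$, so $b_r^\ast = b_r$ and $b_i^\ast$ is the component of $b_i$ orthogonal to $L_i = \sgp{b_{i+1},\dots,b_r}$). I start with an arbitrary $\Ints$-basis $(b_1,\dots,b_r)$ of $L$ and iteratively apply two kinds of unimodular moves that preserve the lattice.

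\emph{Size reduction.} For a pair $i<j$, replace $b_i$ by $b_i - m\,b_j$ where $m \in \Ints$ is chosen so that $|(b_i - m b_j)\cdot b_j^\ast| \le \tfrac12 \|b_j^\ast\|^2$. Because $b_j \equiv b_j^\ast \pmod{L_j}$ and the projection $\proj_{b_j^\ast}$ depends only on $b_j^\ast$, this leaves every $b_k^\ast$ unchanged. Processing pairs in the order $j = i+1, i+2,\dots,r$ (and $i$ running from $r-1$ down to $1$) then yields the size-reduction bound \eqref{eq:reduced-basis2} for every $i<j$ simultaneously.

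\emph{Swap.} Whenever condition \eqref{eq:reduced-basis1} fails at some index $i$, i.e.\ $\|b_i^\ast\|^2 < \tfrac34 \|b_{i+1}^\ast\|^2$, swap $b_i$ and $b_{i+1}$. The Gram--Schmidt vectors $b_k^\ast$ for $k \notin \{i,i+1\}$ are unaffected, while the new pair $(\tilde b_i^\ast, \tilde b_{i+1}^\ast)$ satisfies $\|\tilde b_i^\ast\|\cdot\|\tilde b_{i+1}^\ast\| = \|b_i^\ast\|\cdot \|b_{i+1}^\ast\|$ (the covolume of the rank-$2$ sublattice is preserved) and one computes $\|\tilde b_{i+1}^\ast\|^2 = \|b_i^\ast\|^2 + \mu^2 \|b_{i+1}^\ast\|^2$ for the relevant Gram--Schmidt coefficient $\mu$; after a size-reduction preparation step we may assume $|\mu|\le \tfrac12$, hence $\|\tilde b_{i+1}^\ast\|^2 \le \|b_i^\ast\|^2 + \tfrac14\|b_{i+1}^\ast\|^2 < \|b_{i+1}^\ast\|^2$. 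Then iterate: perform size reduction, look for an index where \eqref{eq:reduced-basis1} fails, swap, and repeat.

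\emph{Termination.} For $i=0,\dots,r$ let $d_i = \prod_{j=i+1}^{r} \|b_j^\ast\|^2$, which equals the squared covolume of $L_i$ and is therefore a lattice invariant depending only on $L_i$. Consider the potential $\Phi = \prod_{i=0}^{r-1} d_i$. Size reductions do not change $\Phi$. A swap at index $i$ only alters $L_{i-1}$ (because $L_j$ for $j \neq i-1$ is unchanged as a set), and the calculation above gives $d_{i-1}^{\mathrm{new}} < \tfrac{3}{4}\cdot d_{i-1}^{\mathrm{old}}$, so $\Phi$ decreases by a factor $< \tfrac34$ per swap. Since $L$ is discrete in $\Reals^n$, the covolumes $d_i$ of sublattices of $L$ take values in a discrete subset of $(0,\infty)$ bounded away from $0$, so $\Phi$ cannot descend indefinitely and the process halts. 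The final basis satisfies both \eqref{eq:reduced-basis1} and \eqref{eq:reduced-basis2}. The main obstacle is the transformation identity for $b_i^\ast,b_{i+1}^\ast$ under a swap combined with the potential bookkeeping; everything else is routine linear algebra.
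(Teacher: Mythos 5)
The paper offers no proof of this proposition at all --- it is quoted from Nguyen--Vall\'ee --- so any argument you give is necessarily ``different from the paper's.'' Your overall strategy (size reductions plus swaps, i.e.\ an LLL-type reduction adapted to the reversed Gram--Schmidt ordering) is a legitimate way to prove existence, and your size-reduction step, the invariance of the $b_k^\ast$ under it, and the swap identities $\|\tilde b_i^\ast\|\,\|\tilde b_{i+1}^\ast\| = \|b_i^\ast\|\,\|b_{i+1}^\ast\|$ and $\|\tilde b_{i+1}^\ast\|^2 = \|b_i^\ast\|^2+\mu^2\|b_{i+1}^\ast\|^2$ are all correct.

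The gap is in the termination bookkeeping. From the swap trigger $\|b_i^\ast\|^2<\tfrac34\|b_{i+1}^\ast\|^2$ and $|\mu|\le\tfrac12$ you correctly derive $\|\tilde b_{i+1}^\ast\|^2\le\|b_i^\ast\|^2+\tfrac14\|b_{i+1}^\ast\|^2<\|b_{i+1}^\ast\|^2$, but this gives only a \emph{strict} decrease: if $\|b_i^\ast\|^2=(\tfrac34-\varepsilon)\|b_{i+1}^\ast\|^2$ and $|\mu|$ is close to $\tfrac12$, the ratio is close to $1$. So the claim ``$d^{\mathrm{new}}<\tfrac34\,d^{\mathrm{old}}$'' is false, and the geometric-decay argument for $\Phi$ collapses. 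This is not cosmetic: your swap condition is exactly LLL with parameter $\delta=1$, which is forced on you, because running LLL with any $\delta<1$ only certifies $\|b_i^\ast\|^2\ge(\delta-\tfrac14)\|b_{i+1}^\ast\|^2<\tfrac34\|b_{i+1}^\ast\|^2$ and hence does not prove \eqref{eq:reduced-basis1} with the constant $\tfrac{\sqrt3}2$. (There is also an off-by-one: a swap at index $i$ changes $L_i$ and $d_i$, not $L_{i-1}$ and $d_{i-1}$; the subgroup $L_{i-1}=\sgp{b_i,b_{i+1},\dots,b_r}$ is unchanged as a set.) The repair is the discreteness observation you mention, applied per index rather than to $\Phi$ with a decay factor: each $d_i$ is the squared norm of the nonzero vector $b_{i+1}\wedge\dots\wedge b_r$ of the lattice $\La^{r-i}(L)$, hence lies in a discrete subset of $(0,\infty)$; since $d_i$ never increases and strictly decreases at every swap at index $i$, it visits only finitely many values in $[\lambda,\;d_i^{(0)}]$ for some $\lambda>0$, so only finitely many swaps occur at each index and the process halts. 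Alternatively, the cleanest route to the stated constant is Hermite's recursive construction: take $b_r$ a shortest nonzero vector of $L$, recurse on the projection of $L$ orthogonal to $b_r$, lift and size-reduce; then $\|b_{r-1}\|\ge\|b_r\|$ together with $\|b_{r-1}\|^2\le\|b_{r-1}^\ast\|^2+\tfrac14\|b_r\|^2$ gives $\|b_{r-1}^\ast\|^2\ge\tfrac34\|b_r^\ast\|^2$ directly, with no termination issue.
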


If $V$ is a nontrivial discrete subgroup of $\Reals^n$, then by
$\Vol(V)$ we denote the Euclidean volume of the parallelepiped spanned by
a free abelian basis for $V$. 
Recall that $\Vol(V)^2 = \det(AA^T)\ge 1$, where $A$
is the $r\times n$ matrix of the generators of $V$.
Therefore, $\Vol(V) \ge 1$ for any non-trivial $V \seq \Ints^n$. 


Suppose that $(b_1,\dots,b_r)$ is a basis satisfying the conclusion of 
Proposition \ref{lm:quasi-orthogonality}. Since 
$\Vol(L) = ||b_1^*|| ||b_2^*|| \dots ||b_r^*||$, it follows from
\eqref{eq:reduced-basis1} that
\begin{equation} \label{eq:basis-prop1}
  ||b_1^*|| \ge \om^{\frac{r-1}2} (\Vol(L))^{\frac 1r},
\mbox{ where } \om = \frac{\sqrt3}2.
\end{equation}
Next, from \eqref{eq:reduced-basis2} for 
$j=i+1,\dots,r$ we obtain
\begin{align*}
  ||b_i||^2 &\le \sum_{j=i}^r ||\proj_{b_j^\ast}(b_i)||^2\\
    &\le ||b_i^*||^2 + \frac14 ||b_{i+1}^*||^2 + \dots 
    + \frac14 ||b_r^*||^2 \\
    &\le ||b_i^*||^2 \left(1 + \frac1{4\om^2} + \frac1{4\om^4} + \dots + \frac{1}{4\om^{2r-2i}} \right) \\
    & < 2 \om^{2i-2r} ||b_i^*||^2.
\end{align*}
Therefore,
\begin{equation} \label{eq:basis-prop2}
    ||b_i|| < 2 \om^{i-r} ||b_i^*|| <  2 \om^{1-r} ||b_1^*||, 
    \any i=1,\dots,r.
\end{equation}
In particular,
\begin{equation} \label{eq:basis-prop3}
  \sin \angle(b_i,L_i) = \frac{||b_i^*||}{||b_i||} > \frac12 \om^{r-i},
  \any i=1,\dots,r,
\end{equation}
where $\angle(x,y)$ denotes the Euclidean angle between $x$ and $y$.

In the case when $L$ is a subgroup of $\Ints^n$, we have $||b_i|| \ge 1$
for each ~$i$ and, hence, $||b_i^*|| > \frac12 \om^{r-1}$ 
by the first inequality \eqref{eq:basis-prop2}. 
Then from 
$\Vol(L) = ||b_1^*|| ||b_2^*|| \dots ||b_r^*||$ 
and $||b_i|| < 2 \om^{i-r} ||b_i^*||$ 
we get an upper bound on $||b_i||$:
\begin{equation} \label{eq:basis-prop4}
    ||b_i|| < 2^r \om^{-\frac{r(r-1)}2} \Vol(L),  \any i=1,\ldots,r.
\end{equation}

\begin{proposition} \label{pr:L-bound}
Assume that system \eqref{eq:constants-reduction} has a solution.
Then \eqref{eq:constants-reduction} has a solution
$(\ovu_1,\ovv_1,\ldots,\ovu_g,\ovv_g)$ 
such that the subgroup
$$
L = \gp{\ovu_1,\ovv_1,\ldots,\ovu_g,\ovv_g,\ovc_1,\ldots,\ovc_m}
$$
has a basis $\set{f_1,f_2,\dots,f_k}$ satisfying
\begin{equation} \label{eq:L-basis-bound}
    ||f_i|| \le n 2^n \om^{-\frac{n(n-1)}2} D^n 
        + 2 \om^{1-n} ||h|| \left( \max_i ||\bar c_i|| \right)^n,
    \any i=1,\ldots,k,
\end{equation}
where 
$
    D = \max_i ||\bar c_i|| + \diam\supp(\de).
$
\end{proposition}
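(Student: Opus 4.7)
The plan is to pick a solution of \eqref{eq:constants-reduction} with $r = \rank_\Ints L$ as small as possible, and then to apply Hermite's reduction (Proposition \ref{lm:quasi-orthogonality}) to $L$, obtaining a reduced basis $(b_1,\ldots,b_r)$ with $r \le n$. Since \eqref{eq:basis-prop2} gives $||b_i|| < 2\om^{1-n}||b_1||$ for every $i$, it is enough to bound $||b_1||$, and the two terms of \eqref{eq:L-basis-bound} will correspond to two separate contributions to this bound.

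Inside $L$ distinguish the ``forced'' subgroup $L_0$ generated by the coefficients $\bar c_1,\ldots,\bar c_m$ together with the elementary translations of $A_n$ required to realize $\pi_L(\de)=0$ --- namely, differences of tails of like-labeled edges in $\supp(\de)$. Each generator of $L_0$ has norm at most $D$, so Hadamard's inequality gives $\Vol(L_0) \le D^{\rank L_0}$, and applying \eqref{eq:basis-prop4} to a Hermite basis of $L_0$ bounds each vector by $2^n \om^{-n(n-1)/2} D^n$. When $L$ coincides with the saturation of $L_0$, this already yields the first term of \eqref{eq:L-basis-bound}, the extra factor $n$ absorbing the saturation index. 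Otherwise $b_1 \notin L_0$.

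In the remaining case I apply Lemma \ref{lm:redution-wrt-single-vector} with distinguished element $b_1$, arranging $\bar u_1 = b_1$ and $\bar v_1, \bar u_i, \bar v_i \in L_1 = \sgp{b_2,\ldots,b_r}$ for $i \ge 2$. Minimality of $r$ forces $\bar v_1 \ne 0$: otherwise a symplectic transformation would absorb $\bar u_1$ into $L_0 + L_1$, contradicting $b_1 \notin L_0$. Projecting the wedge equation \eqref{eq:reduced-homology-condition} into the quotient $\La^2(A_n)/(\La^2(L_1)+K)$ kills $\sum_{i\ge 2}\bar u_i \wedge \bar v_i$ and reduces $K$ to the image generated by wedges $\bar c_i \wedge b_1$, leaving essentially the identity $b_1 \wedge \bar v_1 \equiv \pi(h)$. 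Extracting the $b_1^*$-component via Remark \ref{rm:external-square-bases} converts this into a scalar bound on $||b_1^*||$ in terms of $||h||$ divided by the shortest nonzero projection of the $\bar c_i$'s outside $L_1$. This last quantity is controlled by a Hermite basis of $\sgp{\bar c_1,\ldots,\bar c_m}$ together with \eqref{eq:basis-prop2} and \eqref{eq:basis-prop4}, producing the second term of \eqref{eq:L-basis-bound} after multiplication by $2\om^{1-n}$.

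The hardest step will be this final projection: making rigorous the reduction of the wedge equation modulo $\La^2(L_1) + K$ and extracting the correct power $(\max_i ||\bar c_i||)^n$ from the geometry of $\sgp{\bar c_1,\ldots,\bar c_m}$ inside $L$. Everything else amounts to routine applications of Hermite's basis estimates, Hadamard's inequality, and symplectic bookkeeping.
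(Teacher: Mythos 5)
Your overall strategy is the paper's: isolate a coefficient-forced sublattice $L_0$ of covolume at most $D^n$ (the coefficients together with the translations witnessing $\pi_L(\de)=0$), Hermite-reduce, move the one extra generator into $\bar u_1$ via Lemma \ref{lm:redution-wrt-single-vector}, use the wedge equation to bound the single potentially long basis vector, and fall back on a rank-reduction when the wedge argument degenerates. But you have misidentified the degenerate case, and that is where the proof actually lives. The wedge $b_1\wedge\bar v_1$ gives no information not only when $\bar v_1=0$ but whenever $\bar v_1\in Q=\sgp{\bar c_1,\dots,\bar c_m}$: in that case $b_1\wedge\bar v_1$ itself lies in the image of $K$, your congruence reads $0\equiv\pi(h)$, and no bound on $||b_1^*||$ follows. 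Your minimality-of-$r$ argument only disposes of $\bar v_1=0$. The missing observation (which the paper supplies) is that $\bar v_1\in Q$ implies $L\wedge\bar v_1\seq K$, so replacing $\bar u_1$ by $\bar u_1-b_1$ changes $\sum_i\bar u_i\wedge\bar v_i$ only by an element of $K$ and produces a solution whose subgroup $L$ has strictly smaller rank; only with this step does minimality force $\bar v_1\notin Q$, which is exactly what gives the transverse component $\al$ of $\bar v_1$ the lower bound $|\al|\ge\Vol(Q)^{-1}\ge\left(\max_i||\bar c_i||\right)^{-n}$ and produces the second term of \eqref{eq:L-basis-bound}. Relatedly, the denominator in your scalar bound is misidentified: it is the component of $\bar v_1$ orthogonal to $\CS(Q)$, not ``the shortest nonzero projection of the $\bar c_i$'s outside $L_1$.''

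A second, more technical, divergence: you Hermite-reduce $L$ itself and assert that $b_1\notin L_0$ unless $L$ equals the saturation of $L_0$. The paper instead Hermite-reduces the orthogonal projection $\pi(L)$ of $L$ onto $L_0^\perp$ and lifts the resulting basis to $d_1,\dots,d_r\in L$ with error controlled by $\sum_j||f_j||$ --- this lift error, not a ``saturation index,'' is where the factor $n$ in the first term of \eqref{eq:L-basis-bound} comes from (the saturation costs nothing, since passing to a finite-index supergroup only decreases covolume). The distinction matters because the quantity your wedge argument controls is $||b_1^*||$, the Gram--Schmidt component of $b_1$ orthogonal to $\sgp{b_2,\dots,b_r}$, and for the inequality $||b_i||<2\om^{1-r}||b_1^*||$ to be useful you must guarantee that the direction in which $L$ is stretched beyond $L_0$ is the one recorded by $b_1^*$ while the $L_0$-part stays bounded; reducing $\pi(L)$ rather than $L$ is what arranges this. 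Your skeleton is repairable along these lines, but as written the case $0\ne\bar v_1\in Q$ is a genuine gap.
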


\begin{proof}
Consider a solution 
$(\bar u_1, \bar v_1, \dots, \bar u_g, \bar v_g)$
of \eqref{eq:constants-reduction}. We will show that
if $L$ is, in a certain sense, ``sufficiently large''
then the solution $(\bar u_1, \bar v_1, \dots, \bar u_g, \bar v_g)$
can be changed to decrease the dimension of $L$.
As above, let
$
    Q = \sgp{\bar c_1, \bar c_2, \dots, \bar c_m}.
$ 
We proceed in two steps.

\emph{Step 1.}
Define a (finite) subset $\De$ of $A_n$:
$$
    \Delta = \setof{g \in A_n}{gs = t \text{ for some } s,t\in \supp(\de)}.
$$
Let $L_0$ be the minimal direct summand of $L$ 
containing both $Q$ and $\De\cap L$.
Since $\pi_{L}(\de) = 0$, the equality 
$\pi_{L_0}(\de) = 0$ holds by construction.
Let $U$ be a maximal set of linearly independent 
elements of 
$\set{\bar c_1, \bar c_2, \dots, \bar c_m} \cup (\De \cap L)$. 
Then $\sgp{U}$ has finite index in 
$\sgp{Q \cup(\De\cap L)}$ and, hence, in $L_0$. 
%
Since $||u||\le D$ for each $u\in U$, we have
$$
    \Vol(L_0) \le \Vol(\sgp{U}) 
    \le D^n.
$$
By Proposition \ref{lm:quasi-orthogonality} 
and \eqref{eq:basis-prop4} there exists a basis 
$\set{f_1,f_2,\dots,f_q}$ for ~$L_0$ satisfying
\begin{equation} \label{eq:L0-basis-bound}
    ||f_i|| \le 2^n \om^{-\frac{n(n-1)}2} D^n, \any i=1,\ldots,q.
\end{equation}

\emph{Step 2.}
Here we perform the reduction step if $L$ is ``sufficiently large''.
We consider the free abelian group $A_n$ with 
basis $(\bar a_1, \bar a_2, \dots, \bar a_n)$
as canonically embedded in $\Reals^n$. 
For $X \seq \Reals^n$, denote by $\CS(X)$ the linear $\Reals$-subspace 
spanned by $X$.
Let $L_0^\perp$ be the orthogonal complement of $\CS(L_0)$ 
and let
$\pi: \Reals^n \to L_0^\perp$ denote the orthogonal projection.

Choose a basis $(b_1,\dots,b_r)$ for $\pi(L)$ 
as in Proposition \ref{lm:quasi-orthogonality}
and let vectors $b_1^*$, $b_2^*$, $\dots$, $b_r^*$ be
obtained from $b_1, b_2,\ldots, b_r$ 
by the Gram--Schmidt orthogonalization process starting from $b_r$.
By \eqref{eq:basis-prop2},
$$
    ||b_i|| < 2 \om^{1-r} ||b_1^*||, \any i=1,\dots,r.
$$
For each $i$, choose $d_i \in L$ such that $b_i = \pi(d_i)$. 
We may assume that $d_i - b_i$ is a real linear 
combination of vectors in the basis $\set{f_1,f_2,\dots,f_q}$ for $L_0$
with coefficients in the interval $[0,1)$
(otherwise we add an appropriate integral linear combination of
$f_1,\ldots,f_q$ to $d_i$). Hence,
\begin{equation} \label{eq:L-extra-basis-bound}
    ||d_i|| < 2 \om^{1-r} ||b_1^*|| + \sum_{i=1}^q ||f_i||, 
    \any i=1,\dots,r.
\end{equation}
By construction, the tuple $(d_1,\dots,d_r)$ is a basis of a free 
abelian subgroup of $L$ and 
$L = L_0 \oplus \sgp{d_1,\dots,d_r}$. 
We prove that if $||b_1^*||$ 
is sufficiently large, then there is a solution 
of \eqref{eq:constants-reduction} with the subgroup
$$
    L' = \sgp{L_0,d_2,\dots,d_r},
$$
instead of $L$.
Observe that $d_1$ and $b_1$ have the same orthogonal projection $b_1^*$
to the orthogonal complement of $\CS(L_0)$.
By Lemma \ref{lm:redution-wrt-single-vector}, we can assume that
$$
    \bar u_1 \in d_1 + L', \quad \bar v_1, \bar u_2, \bar v_2, 
        \dots \bar v_{2g} \in L'.
$$
Below we prove that the inequality
\begin{equation} \label{eq:big-L-assumption}
    ||b_1^*|| > ||h|| \Vol(Q),
\end{equation}
implies that $\bar v_1 \in Q$. 
On the way to contrary assume that $\bar v_1 \notin Q$. 
Choose an orthonormal basis $(e_1,e_2,\dots,e_n)$ for $\Reals^n$
compatible with the following chain of linear subspaces of $\Reals^n$:
$$
    \CS(Q) \seq \CS(\sgp{Q,\bar v_1}) \seq \CS(L') \seq \CS(L),
$$ 
that is,
\begin{align*}
\CS(Q) &= \CS(e_1,\dots,e_{p-1}), \\
\CS(\sgp{Q,\bar v_1}) &= \CS(e_1,\dots,e_p), \\
\CS(L') &= \CS(e_1,\dots,e_{r-1}), \\ 
\CS(L) &= \CS(e_1,\dots,e_{r}).
\end{align*}
We have
$$
    \bar u_1 \in \pm ||b_1^*|| e_r + \CS(e_1,\dots,e_{r-1})
    \ \ \mbox{ and }\ \ 
    \bar v_1 \in \al e_p + \CS(e_1,\dots,e_{p-1}),
$$
for some $\al \ne 0$.
Note that $\Vol(\sgp{Q, \bar v_1}) = |\al| \Vol(Q)$ and hence
$|\al| \ge (\Vol(Q))^{-1}$.
Project both sides of the equality
$$
  \bar u_1 \wedge \bar v_1 + \dots + \bar u_g \wedge \bar v_g
  = h \bmod K,
$$
onto the basis bivector $e_p \wedge e_r$ of $\La^2 (\Reals^n)$.
The projection of the right-hand side is bounded above 
by $||h||$ (because $K$
is projected onto 0).
The projection of each $\bar u_i \wedge \bar v_i$ for $i=2,\dots,g$ is 0
because $\bar u_i,\bar v_i \in L'$. The projection of 
$\bar u_1 \wedge \bar v_1$ is $\pm ||b_1^*|| \al $
that has the absolute value greater than ~$||h||$ 
by \eqref{eq:big-L-assumption}. 
This gives a contradiction. Thus, $\bar v_1 \in Q$.

Observe that $\bar v_1 \in Q$ implies that $L \wedge \bar v_1 \seq K$ and,
hence, the tuple
$$
    (\bar u_1 - d_1, \bar v_1, \bar u_2,\bar v_2,\dots,\bar u_g,\bar v_g),
$$
is a solution of \eqref{eq:constants-reduction} with
the subgroup ~$L'$ instead of $L$, where $\dim (L') < \dim (L)$.
Since the reduction is possible under the 
assumption \eqref{eq:big-L-assumption} we can assume that 
\eqref{eq:big-L-assumption} does not hold, i.e.:
\begin{equation} \label{eq:b1-bound}
    ||b_1^*|| \le ||h|| \Vol(Q).
\end{equation}
For the required basis $\set{f_1,f_2,\dots,f_k}$ of $L$ we then take
$$\set{f_1,f_2,\dots,f_q,d_1,d_2,\dots,d_r}.$$
Inequalities \eqref{eq:L0-basis-bound}, \eqref{eq:L-extra-basis-bound} 
and \eqref{eq:b1-bound} imply 
the required bound \eqref{eq:L-basis-bound}.
\end{proof}

%
%
%
%

Define a (finite) set $\CL$ of subgroups of $A_n$:
$$
\CL = \left\{L=\gp{f_1,\ldots,f_k} \middle| 
\begin{array}{l}
f_1,\ldots,f_k \mbox{ satisfy \eqref{eq:L-basis-bound}, } \pi_L(\delta)=0, \\
\mbox{and } c_1,\ldots,c_m \in L.
\end{array}
\right\}.
$$
Clearly, the set $\CL$ can be effectively computed for 
a given equation \eqref{eq:constants-reduction}.
The next statement is an immediate corollary of
Proposition \ref{pr:L-bound}.

\begin{corollary} \label{co:restricting-L}
A system \eqref{eq:constants-reduction} 
is solvable if and only if the following system
in unknowns $\bar u_i, \bar v_i \in A_n$ is solvable for some $L\in\CL$:
\begin{equation} \label{eq:reduction-with-L}
\begin{cases}
  \sgp{\bar u_1, \bar v_1, \bar u_2, \bar v_2, \dots, \bar u_g, \bar v_g, \bar c_1, \bar c_2, \dots, \bar c_m} = L, \\
  \bar u_1 \wedge \bar v_1 + \dots + \bar u_g \wedge \bar v_g =  h \bmod K.
\end{cases}
\end{equation}
\end{corollary}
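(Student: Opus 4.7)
The proposed proof is essentially an unpacking of definitions combined with a direct application of Proposition \ref{pr:L-bound}. The plan is to split the equivalence into its two directions and observe that each direction is immediate from something already established.

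For the ``if'' direction, I would take a solution $(\bar u_1, \bar v_1, \dots, \bar u_g, \bar v_g)$ of \eqref{eq:reduction-with-L} for some $L \in \CL$ and verify that it also solves \eqref{eq:constants-reduction}. The first and third equations in \eqref{eq:constants-reduction} are literally the two equations of \eqref{eq:reduction-with-L}, so all that needs checking is $\pi_L(\de) = 0$; but this is one of the defining conditions of membership in $\CL$, so it holds automatically.

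For the ``only if'' direction, I would assume that \eqref{eq:constants-reduction} is solvable and invoke Proposition \ref{pr:L-bound} to obtain a solution $(\bar u_1, \bar v_1, \dots, \bar u_g, \bar v_g)$ whose associated subgroup
$$
L = \sgp{\bar u_1, \bar v_1, \dots, \bar u_g, \bar v_g, \bar c_1, \dots, \bar c_m}
$$
admits a basis $\{f_1, \dots, f_k\}$ satisfying the norm bound \eqref{eq:L-basis-bound}. Because this tuple solves \eqref{eq:constants-reduction}, both $\pi_L(\de) = 0$ and $\bar c_1, \dots, \bar c_m \in L$ hold by construction, so $L$ satisfies all three conditions defining $\CL$ and hence $L \in \CL$. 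The same tuple then solves \eqref{eq:reduction-with-L} for this $L$.

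There is no real obstacle here; the substantive work was done in Proposition \ref{pr:L-bound}. The only point worth a separate remark is the assertion, made just before the statement, that $\CL$ is effectively computable: this follows because \eqref{eq:L-basis-bound} bounds the generators $f_i$ in norm by a quantity determined explicitly by the input data $\bar c_i$, $\de$, and $h$, so there are only finitely many tuples $(f_1, \dots, f_k)$ in $\Ints^n$ to enumerate, and for each candidate subgroup the additional conditions $\pi_L(\de) = 0$ and $\bar c_i \in L$ are decidable by straightforward linear algebra over $\Ints$.
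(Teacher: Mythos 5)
Your proposal is correct and matches the paper exactly: the paper gives no separate proof, stating only that the corollary ``is an immediate corollary of Proposition \ref{pr:L-bound}'', and your two directions (membership in $\CL$ supplies $\pi_L(\de)=0$ for the ``if'' part; Proposition \ref{pr:L-bound} plus the defining conditions of $\CL$ give the ``only if'' part) are precisely the intended unpacking. The remark on effective computability of $\CL$ likewise agrees with the paper's brief comment preceding the corollary.
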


Let $R = L/Q$. As observed above, the subgroup $K$ of $\La^2(L)$
is the kernel of the natural epimorphism $\La^2(L) \to \La^2(R)$.
Passing to images of $\bar u_i$, $\bar v_i$ in $R$
and to the image of ~$h$ in $\La^2(R)$ (and, with a slight abuse,
coming back to notations $u_i$, $v_i$ and $h$)
we rewrite the system
\eqref{eq:reduction-with-L} as follows:
\begin{equation} \label{eq:reduction-mod-Q}
\begin{cases}
  \sgp{u_1, v_1, u_2, v_2, \dots, u_g, v_g} = R, \\
  u_1 \wedge v_1 + \dots + u_g \wedge v_g =  h.
\end{cases}
\end{equation}
Thus, the Diophantine problem for the equation \eqref{eq:equation} is reduced 
to the following problem: {\em Given a finitely generated abelian group $R$,
an element $h \in \La^2 (R)$ and a number $g$, determine if there exist elements $u_i, v_i \in R$
satisfying \eqref{eq:reduction-mod-Q}.}

\section{Solution of the reduced problem}
\label{se:free_abelian_case}

Here we prove that the reduced problem is algorithmically decidable.

\begin{proposition}\label{th:finite_reduction}
There is an algorithm that determines if 
the system \eqref{eq:reduction-mod-Q}
has a solution $(u_1,v_1,\ldots,u_g,v_g)$ or not.
\end{proposition}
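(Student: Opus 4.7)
The plan is to decide solvability of~\eqref{eq:reduction-mod-Q} via a combination of structural reduction on $R$ and a bounded search that uses the symplectic action. Since the claim is decidability, it suffices to exhibit a procedure that either constructs a solution or certifies that none exists. First I would compute the invariant factor decomposition $R \isom \MZ^p \oplus \MZ/d_1 \oplus \cdots \oplus \MZ/d_s$ (via Smith normal form) and re-express $h$ as a skew-symmetric matrix in the resulting basis, with entries taken modulo the appropriate invariant factors. A necessary condition is that $R$ can be generated by $2g$ elements, equivalently $p + s \le 2g$; if this fails, return ``no''. Otherwise, proceed by induction on $g$.

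For the inductive step, I would adapt the symplectic transformations (S1)--(S4) and the reduction of Lemma~\ref{lm:redution-wrt-single-vector} to the finitely generated abelian group $R$ (working, if needed, in a free abelian cover $\wti R \to R$). Given a hypothetical solution $(u_1, v_1, \ldots, u_g, v_g)$, these transformations preserve both $\sum u_i \wedge v_i$ and $\sgp{u_1, v_1, \ldots, u_g, v_g} = R$, and allow one to normalize so that $u_1$ equals a distinguished generator of $R$ and $v_1$ lies in a complementary direct summand $R'$. The residual problem is then to express $h - u_1 \wedge v_1$ as $\sum_{i=2}^{g} u_i \wedge v_i$ with the new $2g - 2$ vectors generating a prescribed subgroup related to $R'$, which is of the same form with $g$ decreased and the ambient group strictly smaller. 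The base case $g = 0$ degenerates to $R = 0$ (whence $h = 0$ automatically).

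The main obstacle is bounding the number of candidates for $v_1$ (equivalently, for $u_1 \wedge v_1$) that need to be tried; without such a bound the recursion does not terminate. I expect the bound to come from the skew-symmetric Smith normal form of $h$ in $\La^2(R)$, which gives an explicit list of decomposable pieces from which $u_1 \wedge v_1$ must be assembled, together with a Hermite-type lattice reduction (in the spirit of Proposition~\ref{lm:quasi-orthogonality}) to control $v_1$ up to the extra freedom of further symplectic transformations on $(u_2, v_2, \ldots, u_g, v_g)$. Torsion in $R$ adds a technical layer requiring the argument to be carried out modulo the kernel $K$ from the preceding section, so that wedges lying in $K$ may be freely absorbed without affecting the equation.
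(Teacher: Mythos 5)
Your overall architecture matches the paper's: split off the torsion via Smith normal form, use the symplectic moves (S1)--(S4) and Lemma~\ref{lm:redution-wrt-single-vector} to normalize $u_1$ against a reduced basis, and recurse on $g$ with $h$ replaced by $h-u_1\wedge v_1$. But the statement you need is an effective bound making the search finite, and that is exactly the step you leave as an expectation (``I expect the bound to come from the skew-symmetric Smith normal form of $h$\dots''). As written, the recursion does not terminate: at each stage there are infinitely many candidates for $v_1$ (and hence for the residual target $h-u_1\wedge v_1$), and you give no argument that only finitely many need to be examined. Moreover, the mechanism you guess at is not obviously workable: a normal form of $h$ as a skew form decomposes $h$ into decomposable pieces in \emph{some} symplectic-like basis of $\La^2(R)$, but the unknowns are constrained to generate all of $R$ with exactly $g$ pairs, and there is no direct way to match the pieces of such a normal form against that generation constraint.

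What actually closes the gap in the paper (Proposition~\ref{pr:solution-bound}) is a Euclidean estimate, not a normal form of $h$. After normalizing $u_1=b_1$ for a Hermite reduced basis and forcing $v_1,u_2,\dots,v_g\in L_1=\sgp{b_2,\dots,b_r}$, the two summands $u_1\wedge v_1\in b_1\wedge L_1$ and $h-u_1\wedge v_1\in\La^2(L_1)$ lie in subspaces of $\La^2(\Reals^r)$ whose mutual angle equals $\angle(b_1,L_1)$ (Lemma~\ref{lm:angle-in-Lambda2}); the quasi-orthogonality bound \eqref{eq:basis-prop3} for a reduced basis then forces $\|u_1\wedge v_1\|\le 2\om^{1-r}\|h\|$, and since $u_1,v_1$ are nonzero integer vectors this bounds $\|u_1\|$ and $\|v_1\|$ themselves, yielding the explicit bound \eqref{eq:solution-bound} and a finite search space (with the torsion coordinates bounded by the invariant factors $d_i$, as you note). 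You correctly sensed that Hermite-type reduction is involved and correctly located the crux, but without this angle argument (or a substitute for it) your proposal is a reduction strategy rather than a proof of decidability.
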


We prove Proposition \ref{th:finite_reduction}
for free abelian $R$ in Section \ref{se:free_case}.
The general case is considered in Section \ref{se:general_case}.

\subsection{The case of free abelian $R$}\label{se:free_case}

Suppose that $R$ is a free abelian group, equipped
with a fixed basis $E = \set{e_1,e_2,\dots,e_r}$, and naturally embedded in 
the normed space $\Reals^n$ with the Euclidean norm $||\cdot||$.
We view $\La^2 (R)$ as embedded in $\La^2 (\Reals^r)$;
on $\La^2 (\Reals^r)$ we have an induced norm defined by 
setting $E^{\wedge2} = \setof{e_i \wedge e_j}{1 \le i < j \le r}$ to be 
an orthonormal basis.
Below we find an effective bound on the sizes of elements 
$u_i$ and $v_i$ satisfying \eqref{eq:reduction-mod-Q}.

\begin{lemma} \label{lm:angle-in-Lambda2}
Let $0 \ne b \in \Reals^r$ and $L$ be a subspace 
of $\Reals^r$ with $\dim(L) \ge 2$.
Then the angle between subspaces $\La^2 (L)$ and $b \wedge L$ of 
$\La^2 (\Reals^r)$ is equal to the angle between ~$b$ and ~$L$.
\end{lemma}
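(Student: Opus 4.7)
The plan is to reduce the computation to an orthogonal decomposition of $b$ relative to $L$, then exhibit the orthogonal projection $\Lambda^2(\Reals^r) \to \Lambda^2(L)$ explicitly on elements of the form $b \wedge x$, $x \in L$. Write $b = b_L + b^\perp$ with $b_L \in L$ and $b^\perp \in L^\perp$; by definition $\cos \angle(b, L) = \|b_L\|/\|b\|$. For any $x \in L$, bilinearity gives
$$
    b \wedge x = b_L \wedge x + b^\perp \wedge x,
$$
with $b_L \wedge x \in \Lambda^2(L)$. I would then show that $b^\perp \wedge x$ is orthogonal to $\Lambda^2(L)$.

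For this, extend an orthonormal basis $e_1, \dots, e_d$ of $L$ to an orthonormal basis of $\Reals^r$ in such a way that $b^\perp$ is a scalar multiple of $e_{d+1}$ (assuming $b^\perp \ne 0$; the case $b \in L$ is trivial). By Remark \ref{rm:external-square-bases}, the bivectors $e_i \wedge e_j$ ($i < j$) form an orthonormal basis of $\Lambda^2(\Reals^r)$; the subspace $\Lambda^2(L)$ is spanned by those with $j \le d$, whereas $b^\perp \wedge x = \|b^\perp\|\, e_{d+1} \wedge x$ is a combination of vectors $e_i \wedge e_{d+1}$ with $i \le d$. These are orthogonal to $\Lambda^2(L)$, so $b_L \wedge x$ is the orthogonal projection of $b \wedge x$ onto $\Lambda^2(L)$.

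Using the identity $\|u \wedge v\|^2 = \|u\|^2 \|v\|^2 - (u \cdot v)^2$ and $b \cdot x = b_L \cdot x$, I compute
$$
    \cos^2 \theta(x) = \frac{\|b_L \wedge x\|^2}{\|b \wedge x\|^2}
    = \frac{\|b_L\|^2 \|x\|^2 - (b_L \cdot x)^2}
           {\|b\|^2 \|x\|^2 - (b_L \cdot x)^2},
$$
where $\theta(x)$ is the angle between the nonzero vector $b \wedge x$ and $\Lambda^2(L)$. The angle between the subspaces $b \wedge L$ and $\Lambda^2(L)$ is $\min_x \theta(x)$, i.e.\ $\cos \theta$ is maximized. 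Since $\dim L \ge 2$, I may choose a nonzero $x \in L$ with $x \perp b_L$, making the cross term vanish; the ratio becomes $\|b_L\|^2/\|b\|^2 = \cos^2 \angle(b,L)$, and a short derivative check shows this is in fact the maximum over $x$.

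The only non-routine step is the orthogonality claim $b^\perp \wedge x \perp \Lambda^2(L)$, which is why I build the adapted basis; the rest is bookkeeping. Degenerate situations ($b \in L$ giving angle $0$, or $b \in L^\perp$ giving angle $\pi/2$) are handled uniformly by the same formula, but I would flag them explicitly at the start to avoid dividing by zero.
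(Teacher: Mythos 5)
Your proof is correct and follows essentially the same route as the paper's: choose an orthonormal basis adapted to $L$ and $b$ (invoking Remark \ref{rm:external-square-bases}), identify the orthogonal projection of $b\wedge x$ onto $\La^2(L)$, and minimize the resulting angle over $x\in L$, using $\dim L\ge 2$ to find $x\perp b_L$. You merely carry out explicitly, via the Lagrange identity and a monotonicity check, what the paper dismisses as ``an easy computation'' after reducing to $y\in\CS(e_1,e_2)$.
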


\begin{proof}
If $b \in L$, then both angles are 0. Hence, we can assume that $b \notin L$
and, consequently, $\dim (L) < r$.
By Remark \ref{rm:external-square-bases}, 
we can assume that the basis $E$ is 
chosen in a ``nice'' way with respect to $b$ and $L$, e.g.:
$$
L = \CS(e_1,e_2,\dots,e_k) \quad\text{and}\quad
b = ||b|| \, (e_1 \cos \al + e_r \sin \al),
$$
where $k < r$ and $\al = \angle (b,L)$. 
Note that $(b \wedge L) \cap \La^2 (L) = 0$. If $S_1$
and $S_2$ are two subspaces of $\Reals^m$ with $S_1 \cap S_2 = 0$,
then the angle between $S_1$ and ~$S_2$ is the minimal possible 
angle between
a vector $x \in S_1$ and its orthogonal projection onto $S_2$. 
In our case, $S_1 = b \wedge L$, $S_2 = \La^2 (L)$ and
$x = b \wedge y$ for some $y \in L$. Up to further change of $E$
we can assume that $y \in \CS(e_1,e_2)$. Then an easy computation
shows that this angle is precisely ~$\alpha$.
\end{proof}

\begin{proposition} \label{pr:solution-bound}
Assume that $R$ is a free abelian group of rank ~$r$.
Then any solution $(u_1,v_1,\dots u_g,v_g)$ 
of the system \eqref{eq:reduction-mod-Q} can be transformed 
by symplectic transformations
into a new solution $(u_1^\ast, v_1^\ast,\ldots,u_g^\ast, v_g^\ast)$
such that for each $i$:
\begin{equation} \label{eq:solution-bound}
    ||u_i^*||, ||v_i^*|| < 2^{r^2} (||h|| + 1).
\end{equation}
\end{proposition}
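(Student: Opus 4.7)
The approach is induction on the rank $r$ of $R$, peeling off one basis vector at each step via Lemma \ref{lm:redution-wrt-single-vector}. For the base cases $r \leq 1$, we have $\La^2(R) = 0$ so $h = 0$, and the generation condition lets us use (S1)--(S4) to reduce the tuple to the zero tuple (if $r=0$) or to $(\pm e_1, 0, \ldots, 0)$ (if $r=1$), trivially satisfying the bound.

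For the inductive step, I would apply Lemma \ref{lm:redution-wrt-single-vector} with $b_1 := e_1$; this is valid because the generation condition $\langle u_i, v_i \rangle = R$ makes $\{e_1, \ldots, e_r\}$ a basis of the subgroup generated by the tuple. The resulting symplectically equivalent tuple $(u_1^{(1)}, v_1^{(1)}, \ldots, u_g^{(1)}, v_g^{(1)})$ has $u_1^{(1)} = e_1$ (norm $1$), while $v_1^{(1)}$ and $u_i^{(1)}, v_i^{(1)}$ for $i \geq 2$ all lie in $R' := \langle e_2, \ldots, e_r \rangle$. Every element of $\La^2(R)$ decomposes uniquely as $e_1 \wedge w + z$ with $w \in R'$ and $z \in \La^2(R')$, and the two summands are orthogonal in the basis $E^{\wedge 2}$, so the decomposition $h = e_1 \wedge \tilde h + h_0$ satisfies $||\tilde h||^2 + ||h_0||^2 = ||h||^2$. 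Comparing the $e_1 \wedge R'$- and $\La^2(R')$-components of the identity $e_1 \wedge v_1^{(1)} + \sum_{i \geq 2} u_i^{(1)} \wedge v_i^{(1)} = h$ forces $v_1^{(1)} = \tilde h$ (so $||v_1^{(1)}|| \leq ||h||$) and $\sum_{i \geq 2} u_i^{(1)} \wedge v_i^{(1)} = h_0$.

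This leaves a sub-problem of the same form in smaller rank: the sub-tuple $(u_2^{(1)}, v_2^{(1)}, \ldots, u_g^{(1)}, v_g^{(1)})$ generates a sublattice $R'' \leq R'$ of rank at most $r-1$ and satisfies $\sum_{i \geq 2} u_i^{(1)} \wedge v_i^{(1)} = h_0 \in \La^2(R'')$. I would then apply the inductive hypothesis to this sub-problem (with $R''$ carrying the Euclidean norm inherited from $\Reals^r$) to produce symplectic images $(u_i^*, v_i^*)$ for $i \geq 2$ with norms at most $2^{(r-1)^2}(||h_0|| + 1) \leq 2^{(r-1)^2}(||h|| + 1)$. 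Setting $u_1^* := e_1$ and $v_1^* := \tilde h$, all $2g$ norms are bounded by $2^{(r-1)^2}(||h|| + 1) < 2^{r^2}(||h|| + 1)$, as required.

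The main obstacle is the case $R'' \subsetneq R'$ of the inductive step: then $R''$ does not inherit an orthonormal basis of its own, and one must check that Proposition \ref{pr:solution-bound} remains valid when applied to an arbitrary free abelian sublattice of $\Reals^r$ with the inherited Euclidean norm, not only to $\Ints^r$ with the standard basis. This should be automatic because the statement and the supporting Lemmas \ref{lm:redution-wrt-single-vector} and \ref{lm:angle-in-Lambda2} are formulated purely in terms of the lattice and the ambient Euclidean structure. An alternative is to augment the sub-tuple by the trivial pair $(0, \tilde h)$ so it generates $R'$ exactly, run induction in the orthonormal ambient $R'$, and then eliminate the extra pair by a symplectic clean-up; either bookkeeping route fits comfortably within the slack left by the $2^{r^2}$ exponent.
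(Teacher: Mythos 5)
Your first peeling step is clean and correct: with the orthonormal basis $E$, the orthogonal splitting $\La^2(R) = (e_1 \wedge R') \oplus \La^2(R')$ really does force $v_1^{(1)} = \tilde h$ with $\|\tilde h\| \le \|h\|$, which neatly avoids the angle estimates the paper needs. But the induction does not close, and the obstacle you flag at the end is not the bookkeeping issue you take it for --- it is fatal to this formulation. The generalized statement you need, namely that Proposition \ref{pr:solution-bound} holds for an arbitrary sublattice $R''$ with the inherited Euclidean norm, is \emph{false}: take $R'' = \sgp{M e_2}$ with $M$ large, $h_0 = 0$, and a single pair. The generation condition forces one of $u,v$ to have norm at least $M$, while the claimed bound is $2^{(r-1)^2}(0+1)$, independent of $M$. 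This situation genuinely arises from valid instances of the original problem: for $R = \Ints^2$, $h = e_1 \wedge e_2$, the solution $(e_1, e_2, M e_2, 0)$ passes your first step intact and hands the inductive call the sub-tuple $(Me_2, 0)$ generating $\sgp{Me_2}$. The only way to shrink $Me_2$ is the transformation (S4) $u_2 \mapsto u_2 - M v_1$, which uses the already-peeled-off entry $v_1 = e_2$ --- exactly the information your induction discards. Your alternative route (augmenting by the pair $(0,\tilde h)$) does not repair this as stated, because symplectic transformations of the augmented tuple are not symplectic transformations of the actual tuple: the fabricated first entry $0$ differs from the true $u_1 = e_1$, so moves of type (S1), (S3), (S4) involving the first pair act differently on the two tuples, and the $e_1$-components they introduce into the other pairs are not accounted for.

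The paper circumvents this by decoupling the two difficulties. Its Step 1 is an induction on $g$ (not $r$) that \emph{drops the generation condition}, bounds only the pairs with $v_i \ne 0$, and parks the pairs $(u_i,0)$ unbounded at the end; since the sublattice generated by the remaining tuple is then arbitrary, it must use a Hermite reduced basis together with Lemma \ref{lm:angle-in-Lambda2} and \eqref{eq:basis-prop3} in place of your orthogonal splitting. Its Step 2 then bounds the leftover pairs $(u_i,0)$ globally, using the (S3)-induced $SL$-action and integer translates of the already-bounded vectors --- precisely the cross-pair transformations your rank induction forbids itself from using. To salvage your approach you would have to strengthen the inductive statement so that the peeled-off pairs remain available for cleaning up the degenerate ones, at which point you essentially reconstruct the paper's two-step argument.
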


\begin{proof}
We consider $R$ as embedded in the Euclidean space $\Reals^r$.
For any $x,y \in \Reals^r$ we have:
\begin{equation} \label{eq:norm-cap-product}
  ||x \wedge y|| = \sin \angle(x,y) \cdot ||x|| \: ||y||.
\end{equation}

{\em Step 1:\/}
By induction on $g$ we prove the following.
{\em If a tuple $(u_1,v_1,\dots u_g,v_g) \in R^{2g}$ satisfies:
$$
  u_1 \wedge v_1 + \dots + u_g \wedge v_g =  h,
$$
then it can be transformed by symplectic transformations to 
the form:
$$
  (u_1,v_1,\dots,u_k,v_k, u_{k+1}, 0, u_{k+2}, 0, \dots, u_g, 0),
$$
where $||u_i||, ||v_i|| \le 4^r \om^{-2r(r-1)} ||h||$
for each $i = 1,\ldots,k$.}

Choose a basis $B = \sgp{b_1,b_2,\dots,b_r}$ for 
$\sgp{u_1,v_1,\dots u_g,v_g}$
by Lemma ~\ref{lm:quasi-orthogonality}.
Using Lemma ~\ref{lm:redution-wrt-single-vector} we may assume that
$u_1 = b_1$ and:
$$
    \sgp{v_1,u_2,\dots,v_g} = \sgp{b_2,\dots,b_r} = L_1.
$$
If $v_1 = 0$, then using transformations (S1) we put the pair $(u_1,0)$
to the end as $(u_g,0)$ and use induction. 
Assume that $v_1 \ne 0$. Let $\al$ be the Euclidean angle
between $u_1 \wedge v_1$ and $h - u_1 \wedge v_1$.
We have:
$$
  ||u_1 \wedge v_1||, \ ||h - u_1 \wedge v_1|| 
    \le \frac{||h||}{\sin \al}.
$$
Since $u_1 \wedge v_1 \in u_1 \wedge L_1$ and 
$h - u_1 \wedge v_1 \in \La^2 (L_1)$, by Lemma \ref{lm:angle-in-Lambda2}
and inequality \eqref{eq:basis-prop3} we have:
$$
  \sin \al \ge \sin \angle (u_1 \wedge L_1, \La^2 (L_1)) 
  \ge \frac12 \om^{r-1},
$$
and, hence:
$$
  ||u_1 \wedge v_1||, \ ||h - u_1 \wedge v_1|| \le 2 \om^{1-r} ||h||,
$$
By \eqref{eq:norm-cap-product},
since $||u_1||, ||v_1|| \ge 1$, we have:
$$
    ||u_1||, ||v_1|| \le 4 \om^{2-2r} ||h||.
$$
Then we use the inductive hypothesis for 
$(u_2,v_2,\dots,u_g,v_g)$ with $h - u_1 \wedge v_1$ instead of $h$.

{\em Step 2.}
Now assume that we are given a tuple
$
  (u_1,v_1,\dots,u_g,v_g)
$
satisfying \eqref{eq:reduction-mod-Q}.
Let $S_1 = \CS(u_1,v_1,\dots,u_k,v_k)$ and let $S_2$ 
be the orthogonal complement
of $S_1$ in $R$. 
Denote $\pi_i: R \to S_i$ $(i=1,2)$ the orthogonal projection map.
Using symplectic transformations (S3) we can implement the 
action of $SL_{g-k}$ on the tuple $(u_{k+1}, u_{k+2}, \dots, u_g)$.
Hence, by Lemma ~\ref{lm:quasi-orthogonality}, we can change
$u_i$ so that for some $t\le g$ the projections $\pi_2(u_i)$
$(i=k+1,\dots,t)$ form Hermit's reduced basis for $S_2$
and $\pi_2(u_i) = 0$ for $i > t$.
Let $u_i^*$ $(i=k+1,\dots,t)$ denote the vector obtained 
from $\pi_2(u_i)$ by the Gram--Schmidt orthogonalization 
process starting from $\pi_2(u_t)$. 
Since $u_1,v_1,\dots,u_g,v_g$ generate $R$ we have:
$$
    ||u_{k+1}^*|| 
        = \frac{\Vol\sgp{S_1,u_{k+1},\dots,u_t}}
        {\Vol\sgp{S_1,u_{k+2},\dots,u_t}}
        = \frac{1}
        {\Vol\sgp{S_1,u_{k+2},\dots,u_t}}
    \le 1.
$$
Then by \eqref{eq:basis-prop2}:
$$
    ||\pi_2(u_i)|| < 2 \om^{1-r}, \quad i=k+1,\dots,g.
$$
It remains to bound 
the other projections $\pi_1(u_i)$ for $i\ge k+1$.
Using transformations (S3) we can add to $u_i$ for $i\ge k+1$
any integer linear combinations of vectors $u_i$ with $i \le k$.
Hence, we can change each $u_i$ for $i \ge k+1$ so that:
$$
    ||\pi_1(u_i)|| < \sum_{i=1}^k ||u_i||
    < r\, 4^r \om^{-2r(r-1)} ||h||,
    \quad i=k+1,\dots,g.
$$
It can be easily checked that the bound obtained 
for $||u_i||$ and $||v_i||$ is less than the right-hand 
side of \eqref{eq:solution-bound}.
\end{proof}

Proposition \ref{pr:solution-bound} immediately implies
Proposition \ref{th:finite_reduction} for a free abeian group $R$.

\subsection{The case of a general abelian $R$}
\label{se:general_case}



Assume that $R$ is a general finitely generated abelian group.
To fix a way of presenting the input data, we
assume that the quotient $R = L/Q$ 
is given by specifying subgroups $L$ and $Q$ of $A_n$,
i.e.\ we are given a set $\set{c_1, c_2, \dots, c_m}$
of generators for $Q$ and a set 
$\set{e_1,e_2,\dots,e_s}$ of generators for $L$ over $Q$.
The element $h \in \La^2(R)$ 
is given by a tuple $(h_{ij})_{1\le i <j \le s}$ of (non-uniquely) defined 
integers so that $h$ is the canonical image of the linear combination $\sum_{i,j} h_{ij} (\bar a_i\wedge \bar a_j)$.
The algorithm goes as follows.

{\em Step\/} 1. 
Represent $R$ in the form
$$
    R = R_{free}\oplus R_{tor},
$$
where $R_{free}\simeq \MZ^r$
and $R_{tor}$ is a finite abelian group. 
From the computational point of view, this means that we find a 
generating set $\set{b_1,b_2,\dots,b_q}$ of $R$ such that 
$B = (b_1,b_2,\dots,b_r)$ is a basis for $R_{free}$ and 
$R_{tor}$ is the direct
sum of finite cyclic subgroups $\sgp{b_i}$ of orders $d_i$
for $r+1 \le i \le q$.

{\em Step\/} 2. 
By Proposition \ref{pr:solution-bound} any solution 
$(u_1,v_1,\dots,u_g,v_g)$ of \eqref{eq:reduction-mod-Q}
can be modified in such a way that its coefficient matrix 
$T = (t_{ij})_{ij}$ in the generators $b_j$
satisfies:
$$
    |t_{ij}| \le  2^{r^2} (||h||_{B^{\wedge2}} + 1) \quad\text{for }
    j \le r,
$$
where $||h||_{B^{\wedge2}}$ is the norm of $h$ written in 
the basis $B^{\wedge2} = \setof{b_i \wedge b_j}{1 \le i < j \le r}$.
Since $\sgp{b_j}$ has finite order $d_j$ for $j > r$, we obtain also:
$$
    |t_{ij}| < d_j \quad\text{for } j > r.
$$
This gives a finite search space for a solution 
of \eqref{eq:reduction-mod-Q}. Checking each equality in 
\eqref{eq:reduction-mod-Q} is obviously algorithmic.
This finishes the proof of Proposition \ref{th:finite_reduction}
and Theorem \ref{th:main_theorem}.

%
%
%

\section{Complexity analysis}
\label{se:complexity}


According to the main result of ~\cite{Lysenok_Ushakov:2016},
the Diophantine problem for orientable quadratic equations
is $\NP$-hard.
In this section we prove the upper bound for 
Theorem \ref{th:main_theorem2} and Theorem \ref{th:main_theorem3}:
the problem belongs to $\NP$ and if 
the rank $n$ of the free metabelian group $M_n$ and 
the number of variables in the equation are fixed, 
then there exists a polynomial-time algorithm.

We rely on existence of polynomial-time algorithms \cite{KB79} solving 
two principal problems of integer linear algebra:
reduction of an integer matrix to the Hermite and the Smith normal forms.
More precisely, there exist the following algorithms working in 
time bounded by a polynomial on the size of the input
(see \cite{S00}):

\medskip\noindent
\textbf{Extended Hermite normal form algorithm:}
Given an integer $(p\times q)$-matrix $M$, the algorithm 
computes the Hermite normal form $M^\dag$ of ~$M$ 
and a matrix $U \in GL(p,\MZ)$ such that $M^\dag = UM$.
Absolute values of entries of $U$ and $M^\dag$ are bounded by 
$r^r ||M||^r$, where $r = \max(p,q)$ and $||M||$ is the largest
absolute value of an entry in $M$.

\medskip\noindent
\textbf{Extended Smith normal form algorithm:}
Given an integer $(p\times q)$-matrix $M$, the algorithm 
computes the Smith normal form $M^\ddag$ of $M$ and matrices 
$U \in GL(p,\MZ)$ and $V \in GL(q,\MZ)$ 
such that $M^\ddag = UMV$.
Absolute values of entries of $U$ and $V$ are bounded by
$r^{4r+5} ||M||^{4r+1}$. 

\medskip
As a consequence, we obtain a polynomial-time algorithm for
the following version of the membership 
problem for finitely generated free abelian groups:
given a finite set $S$ of elements of a finitely generated free abelian
group $A$ and an element $x \in A$, determine if $x$ belongs to 
the subgroup $\sgp{S}$. In particular, given two elements $x,y \in A$
we can determine in polynomial time
whether $x$ and $y$ belong to the same coset modulo $S$.
It is assumed here that elements of $A$ are presented as
vectors of integers with entries written in the binary form.

Note that the diagonal entries of 
the Smith normal form of a matrix ~$M$ are equal to the
greatest common divisor of certain sets of minors of ~$M$. 
In particular, if $d$ is such an entry, then by Hadamard's
inequality we have $|d| \le r^{r/2} ||M||^r$.

The input data for our problem
consists of the number $g$ and 
the coefficients $c_1$, $\dots$, $c_m$ written as words in 
the alphabet $\set{a_1^{\pm1}, \dots, a_n^{\pm1}}$.
As observed in Introduction, we can assume that $g \le n$, i.e.\ if
the group $M_n$ is fixed then $g$ is bounded by a constant. 
For the size of 
the input we take $N = \sum_{i=1}^m |c_i| + m + n$.

We analyze computational complexity of each step of the algorithm
and prove existence of an appropriate $\NP$-certificate.
For fixed ~$m$ and ~$n$ we show that the step can be performed 
in polynomial time. By writing $x \le O(F(N,m,n))$ we mean that
$x \le c\cdot F(N,m,n)$ for some constant $c$ 
independent of $N$, $m$ and $n$.

{\em Restricting to finitely many $w_i$'s.}
Proposition \ref{pr:bounding-w} reduces the problem to checking 
existence of a solution of finitely 
many systems \eqref{eq:coefficient-condition} $\&$ 
\eqref{eq:reduced-homology-condition} over all possible 
choices of tuples 
$\ov W = (\bar w_1, \bar w_2, \dots, \bar w_m) \in (A_n)^m$
satisfying $|\bar w_i| \le \sum_j |c_j|$.
The record size of $\ov W$ is bounded by $O(N^2)$,
so we can consider $\ov W$ as a part of an $\NP$-certificate for solvability
of the initial system \eqref{eq:equation}
and, thus, reduce the problem to a single choice of ~$\ov W$.
The size of the search space for $\ov W$ is bounded by $O((2N+1)^{mn})$
which is polynomial in $N$ when $m$ and $n$ are fixed. 

For each $\ov W$ we reduce the problem to a system 
of the form \eqref{eq:constants-reduction}. 
It can be easily seen from the proof 
of Corollary ~\ref{co:constants-reduction} that
for constants $\de  \in C_1(\Ga_n)$ and 
$h = \sum_{1 \le i < j \le n} h_{ij} (\bar a_i \wedge \bar a_j)$ 
occurring in \eqref{eq:constants-reduction} we have:
\begin{equation} \label{eq:delta-and-h-bounds}
    \diam\supp(\de) \le 2N\, \mbox{ and }\,  \max_{i,j} |h_{ij}| \le 2 N^2.
\end{equation}

{\em Restricting to finitely many subgroups $L$.}
The next step reduces the system \eqref{eq:constants-reduction} 
to finitely many systems \eqref{eq:reduction-with-L}. 
Each system \eqref{eq:reduction-with-L} is defined 
by a choice of a subgroup $L$ generated by a basis 
$U = \{f_1,\ldots,f_k\}$ satisfying inequality \eqref{eq:L-basis-bound}
and the equality $\pi_{L}(\de)=0$. 
The right-hand side of \eqref{eq:L-basis-bound}
is bounded by $O(C^{n^2} N^{n+1})$ for some constant $C > 1$.
Hence, the number of possible choices for $U$ is bounded by 
$O(C^{n^3} N^{n(n+1)})$.
For each choice of $U$, the equality $\pi_{L}(\de)=0$
and the membership $c_i\in L$
can be checked in polynomial time using the algorithm 
for the membership problem in finitely generated free abelian groups.
Thus, we can include a generating set $U$ for $L$
into our $\NP$-certificate and reduce the problem to 
a single choice of $L$. 
For a fixed $n$ the set $\CL$ can be computed in polynomial time.

As a final step of the reduction, 
for a particular choice of a generating set for $L$
we rewrite the system \eqref{eq:reduction-with-L} 
in the form ~\eqref{eq:reduction-mod-Q}. 
The input data for ~\eqref{eq:reduction-mod-Q} is already 
presented in the form described in Section \ref{se:general_case}.



{\em Solution of the reduced problem.}
We follow the procedure described in Section \ref{se:general_case}
which consists of two steps: at Step 1 we obtain
a normalized basis for the abelian group $R = L/Q$
and at Step 2 we give a bound on the solution in terms of the new basis.
We show that Step~1 can be performed in polynomial time
and bound at Step 2 gives a solution of the size 
bounded by a polynomial in $N$.

The input data for ~\eqref{eq:reduction-mod-Q} consists of three matrices: 
a matrix $K_0$ expressing the generators for $Q$ in 
the basis $B_0 = \set{\bar a_1, \bar a_2, \dots, \bar a_n}$,
a matrix ~$M_0$ expressing the generators for $L$ in $B_0$
and a skew-symmetric matrix ~$H_0$ expressing $h$
in the basis $B_0^{\wedge 2}$ of $\La^2(A_n)$.
We have upper bounds:
$$
    ||K_0|| \le N, \quad ||M_0|| \le C^{n^2} N^{n+1}, \quad ||H_0|| \le 2N^2.
$$
The computation consists of the following steps.

{\em Transferring the data to a basis of $L$.}
We reduce the matrix $M_0$
to a Hermite normal form $M_1$ producing a basis $B_1$ for $L$. 
According to the above bound we can assume that:
$$
    ||M_1|| \le n^n ||M_0||^n.
$$
We express the generators for $Q$ in the new basis $B_1$. For the 
new matrix $K_1$ of coordinates of generators of $Q$ we have:
$$
    K_1 = K_0^* \, {M_1^*}^{-1},
$$
where $K_0^*$ and $M_1^*$ are certain submatrices of $K_0$ and $M_1$.
This gives the bound:
$$
    ||K_1|| \le n ||K_0|| \cdot ||{M_1^*}^{-1}|| 
    < n ||K_0|| \cdot n^{n/2} ||M_1||^n.
$$
In a similar way, for the new matrix $H_1$ of coordinates of $h$ in the
basis $B_1^{\wedge2}$ we have:
$$
    H_1 = \bigl({M_1^{*}}^{-1} \bigr)^T H_0^* \, {M_1^*}^{-1},
$$
and, hence:
$$
    ||H_1|| < n^2 ||H_0|| \cdot n^n ||M_1||^{2n}.
$$

{\em Representing the subgroup $R = L/Q$ as
$
    R_{free}\oplus R_{tor}
$.}
This requires reducing the matrix $K_1$ to the Smith normal form $K_2$.
We have
$$
    K_2 = U K_1 V
$$
for some unimodular matrices $U$ and $V$ satisfying
$$
    ||U||, ||V|| \le n^{4n+5} ||K_1||^{4n+1}.
$$
We obtain a basis $B_2 = \set{b_1,b_2,\dots,b_q}$ of $R$ such that 
$R_{free} = \sgp{b_1,b_2,\dots,b_r}$ and 
$R_{tor} = \MZ_{d_{r+1}} \oplus \dots \oplus \MZ_{d_q}$ 
where $\MZ_{d_i} = \sgp{b_i}$ for $i > r$.
The numbers ~$d_i$ are non-unit diagonal elements of $K_2$ and
we have
$$
    d_i < n^{n/2} \, ||K_1||^n, \quad i=r+1,\dots,q.
$$
We express the canonical image of $h$ in the new basis $B_2^{\wedge2}$
obtaining a matrix $H_2$ computed as
$$
    H_2 = \bigl({V^{*}}^{-1} \bigr)^T H_1^* \, {V^*}^{-1}
$$
This gives the bound
$$
    ||H_2|| < n^2 ||H_1|| \cdot n^{n} ||V||^{2n}.
$$

{\em The final solution bound.}
According to Step 2 of the procedure in \ref{se:general_case},
if the system  ~\eqref{eq:reduction-mod-Q} is solvable then it 
has a solution  $(u_1,v_1,\dots,u_g,v_g)$ represented by a matrix $T$
in the basis $B_2$ satisfying
$$
    ||T|| \le  \max\bset{2^{n^2} (||H_2|| + 1), \ \max_i |d_i| }
$$
We observe that all numerical upper bounds in our computation 
are of the form $O(2^{f(n)} N^{g(n)})$ where $g(n)$ and $f(n)$ 
are bounded by polynomials on $n$. This gives a similar bound on $||T||$.
This implies that the record size of $T$ is bounded by a polynomial
on the input size of the problem, and hence the problem belongs to $\NP$.
If $n$ is fixed, we obtain a search space for $T$ of a polynomial size.
The proof is completed.

\section{Open problems}

We left the case of non-orientable quadratic equations open.
We think that methods of the current paper can be appropriately modified
to treat that case as well.

As mentioned in Introduction, the general Diophantine problem in free metabelian groups is undecidable. It would be interesting to
investigate some specific (non-quadratic) classes of equations. 
For instance, Baumslag, Mahler, and Lyndon in
\cite{Baumslag-Mahler:1965,Lyndon:1966} 
study equations of the form $x^ny^mz^l=1$. 
Also, Roman'kov in \cite{Roman'kov-2017} investigates solvability of regular
one-variable equations in the class of metabelian groups.

Spherical equations are a straightforward generalization of
conjugacy equations. 
It is shown in \cite{Noskov:1982} that the conjugacy problem 
in finitely generated metabelian groups is decidable.
It is currently unknown if a similar result holds
for spherical quadratic equations. 
It is also interesting to investigate the Diophantine problem 
for quadratic equations in the whole class of finitely generated metabelian groups
and in specific groups such as Baumslag--Solitar groups. 

Another possible generalization of our work is a study of groups
of type $F/N'$ where $F$ is a free group and 
$N \trianglelefteq F$. Recall that the power problem in a group $G$
is to determine if $u\in\gp{v}$ for given $u,v\in G$.
It is shown in \cite{Gul-Sohrabi-Ushakov:2017} 
that the conjugacy problem in $F/N'$
is decidable if and only if 
the power problem in $F/N$ is decidable. 
It would be interesting to 
generalize that result at least to spherical quadratic equations.


\end{document}